\def\N{\mathbb{N}}
\def\R{\mathbb{R}}
\newtheorem{theorem}{Theorem}
\newtheorem{lemma}{Lemma}
\newtheorem{proposition}{Proposition}
\newtheorem{corollary}{Corollary}
\theoremstyle{remark}\newtheorem{remark}{Remark}
\theoremstyle{remark}\newtheorem{example}{Example}
\def\to{\rightarrow}
\def\smooth#1{\mathcal C^{#1}}
\def\poly#1{\mathcal P^{#1}}
\def\spline#1#2#3{\mathcal S^{#1}_{#2,#3}}
\def\proj#1#2#3{\Pi_{#1,#2,#3}}
\def\const#1#2#3{C_{#1,#2,#3}}
\def\norm#1{\Vert#1\Vert}
\def\rt#1#2#3{R_{#1,#2,#3}}
\def\B#1#2#3{B_{#1,#2,#3}}
\def\brkH{\mathfrak H}
\def\brkS#1#2#3{\mathfrak S^{#1}_{#2,#3}}
\def\brkC#1#2#3{\mathfrak C_{#1,#2,#3}}
\def\brkR#1#2#3{\mathfrak R_{#1,#2,#3}}
\def\brkB#1#2#3{\mathfrak B_{#1,#2,#3}}
\DeclareMathOperator{\SPAN}{span}
\def\cf#1#2#3#4{C_{(#1,#4),#2,#3}}
\def\rf#1#2#3#4{R_{#1,#2,#3,#4}}
\def\Bf#1#2#3#4{B_{#1,#2,#3,#4}}
\newcommand{\Checkmark}{$\color{green}\checkmark$}
\begin{document}

\title{Approximation in FEM, DG and IGA:\\ A Theoretical Comparison}
\author{
Andrea Bressan\footnote{
Department of Mathematics,
University of Oslo, Norway}
\footnote{{\it email: andbres@math.uio.no}}
\and Espen Sande\footnotemark[1] \footnote{\it email: espsand@math.uio.no} }

%
%

\maketitle

\begin{abstract}
In this paper we compare approximation properties of degree $p$ spline spaces with different numbers of continuous derivatives. 
We prove that, for a given space dimension, $\smooth {p-1}$ splines provide better a priori error bounds for the approximation of functions in $H^{p+1}(0,1)$.
Our result holds for all practically interesting cases when comparing $\smooth {p-1}$ splines with  $\smooth {-1}$ (discontinuous) splines.
When comparing $\smooth {p-1}$ splines with $\smooth 0$ splines our proof covers almost all cases for $p\ge 3$, but we can not conclude anything for $p=2$.
The results are generalized to the approximation of functions in $H^{q+1}(0,1)$ for $q<p$, to broken Sobolev spaces and to tensor product spaces.
\end{abstract}

\section{Introduction}
The aim of this work is to compare the approximation properties of different piecewise polynomial spaces commonly employed in Galerkin methods for  PDEs. 
Following the well known Lemmas of C\'ea and Strang such approximation results imply a priori error estimates for these numerical methods. 
In particular we consider the tensor product spaces used in Discontinuous Galerkin (DG), Finite Element Methods (FEM) and IsoGeometric Analysis (IGA) that differ only in their global smoothness.
As such our comparison provides an answer to the following question: ``does smoothness impede or favour approximation?''.

It was noticed by Hughes, Cottrell and Bazilevs \cite{Hughes:2005} that smoother spaces have better approximation properties in their numerical tests.
Spline approximation in the IGA setting was first studied by Bazilevs,  Beir\~ao da Veiga, Cottrell, Hughes and Sangalli \cite{MR2250029}.
Later, Evans, Bazilevs, Babuska and Hughes \cite{Evans:2009}  numerically computed approximation constants and observed that the maximally smooth splines provide better a priori bounds on the approximation error.
Beir\~ao da Veiga, Buffa, Sangalli and Rivas \cite{MR2800710} studied how the approximation depends on the mesh-size, the degree and the global smoothness.
Takacs and Takacs \cite{Takacs:2016} proved an upper bound for the approximation error with an explicit constant.
Recently, Floater and Sande \cite{Floater:2017,Floater:2018} provided optimal constants on which we base our results.

The comparison in this paper is related to the $n$-width theory \cite{Kolmogorov:36,Pinkus:85}, i.e., looking at approximation properties of a space of fixed dimension. Our results can be seen as a partial answer to an $n$-width problem constrained to piecewise polynomial spaces on uniform partitions.
We will first look at the univariate setting before extending the results to general tensor product spaces.

Let $\poly p$ be the space of polynomials of degree at most $p$, and $L^2=L^2(0,1)$ and $H^{q+1}=H^{q+1}(0,1)$ be the standard Sobolev spaces. For a given $n\in \N$ let $I_j$ be the interval $[\frac jn,\frac{j+1}n)$ and define the spline space $\spline p k n$, of degree $p$, smoothness $k$ and on $n$  segments by
\begin{equation}\label{eq:def-spline}
\spline p k n =\big\{f \in\smooth k([0,1]): f|_{I_j} \in\poly p,\, j=0,\dots,n-1\big\}.
\end{equation}
Here $k=-1$ means that jumps are allowed at the internal breakpoints.
Furthermore, let $\proj pkn$ be the $L^2$ projection onto $\spline pkn$ and $\cf pknq$ be the smallest real number such that
\begin{equation}\label{eq:estimate-form}
\norm{f- \proj pkn f}\le \cf pknq \norm{\partial^{q+1} f}
\end{equation}
holds for all $f\in H^{q+1}$. Here $\norm{\cdot}$ denotes the $L^2$ norm.
Finally for $q=p$ we let $\const pkn:=\cf pknp$.

The studied $n$-width problem can then be formulated as follows. Given the space dimension $N$ and Sobolev regularity $q$, find the degree $p$, smoothness $k$, and number of segments $n$ such that the constant $\cf pknq$ is minimized. 
Note that for each $N$ only finitely many $(p,k,n)$ fulfill 
\begin{equation}\label{eq:spline-dim}
N=\dim\spline p k n =(n-1)(p-k)+p+1.
\end{equation}
It is then possible to try an exhaustive approach. The difficulty of such a strategy is that the constants $\cf pknq$ are solutions of  eigenvalue problems that are badly conditioned. Any conclusion based on this strategy would then have to take into consideration the reliability of the method used to compute the constant.

Our approach is to first provide lower and upper bounds for $\const pkn$ and base the conclusions on provable properties of these bounds.
In particular we compare $\const p{p-1}m$ with $\const pkn$ for $k<p-1$ under the constraint 
$$\dim\spline {p} {p-1} {m} = \dim\spline {p} {k} {n}, $$
i.e., for 
\begin{equation}\label{eq:m-formula}
m=(n-1)(p-k)+1.
\end{equation}
Note that for a fixed number of segments $n$ we have $\spline p{k_1}n \supseteq \spline p{k_2}n$ whenever $k_1\le k_2$ so that necessarily $\const p{k_1}n \le \const p{k_2}n$ under the same condition. However, for a fixed dimension $N$ the smoother space is defined on a finer partition.
We show in Section \ref{sec:comparison-1d} that $\const p{p-1}m$ is smaller than $\const pkn$ in almost all cases of practical interest for $k=0$ (see Theorem~\ref{thm:fem}) and $k=-1$ (see Theorem~\ref{thm:dg}).
In  Section~\ref{sec:low} we extend our results to the case of $p>q$ in \eqref{eq:estimate-form}. Here we compare the approximation of maximally smooth spline spaces of a ``too high degree'', $\spline {p} {p-1} {m}$, with spline spaces of lower smoothness, $\spline {q} {k} {n}$, under the constraint $\dim\spline {p} {p-1} {m} = \dim\spline {q} {k} {n}$. The main result of this section is contained in Theorem~\ref{thm:lower}. A comparison in the case of Broken Sobolev spaces is then performed in Section~\ref{sec:broken} and extensions to tensor product cases are considered in Section~\ref{sec:tens}.

The fact that smoother spaces provide better approximation could be surprising to people not familiar with the $n$-width theory, indeed it could seem reasonable that smoother spaces are more ``rigid'' and thus that they can not approximate functions that are less regular. 
This is not true: for instance it was shown by Kolmogorov \cite{Kolmogorov:36} that
$$
\SPAN \{1,\cos(\pi x),\ldots,\cos((N-1)\pi x)\} 
$$
is optimal for $H^1$ in the $n$-width sense, meaning that no other $N$-dimensional subspace of $L^2$ can provide a better a priori error estimate for $H^1$ functions.
Based on results of Melkman and Micchelli \cite{Melkman:78} it was proved in \cite{Floater:2017} that for all $q$ and $N$ there exists an optimal $\smooth {\ell (q+1)-2}$ spline space of degree $\ell (q+1)-1$ for any $\ell=1,2,\dots$.  In fact, for $q=0$ and with even degrees $p$, the knots of the optimal spline spaces are uniform and so they are subspaces of $\spline p{p-1}n$.

\section{Upper and lower bounds for $\const pkn$}\label{sec:univ}

We prove the following bounds on the best constants $\const {p}{k}n$, $k=-1,0, \ldots, p-1$.
\begin{theorem}\label{thm:bounds}
For all $p\ge 0$, $k\in\{-1,0,\ldots,p-1\}$ and $n\ge 1$ we have
\begin{align}
\label{est:-1}   \frac{(p+1)!}{(2p+2)!\sqrt{2p+3}} n^{-p-1}\le &\const {p}{k}n \le (n\pi)^{-p-1}
\end{align}
\end{theorem}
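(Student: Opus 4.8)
The plan is to get the upper bound from a direct reduction to the polynomial case on each subinterval, and the lower bound from exhibiting a specific test function on which the $L^2$-projection error is large.

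For the upper bound $\const pkn \le (n\pi)^{-p-1}$: since $\spline p{p-1}n \subseteq \spline pkn$ for every $k\le p-1$, and the $L^2$-projection is the best approximation in $L^2$, we have $\cf pknp \le \cf p{p-1}{n}{p}$, so it suffices to prove the bound for $k=p-1$, i.e.\ for maximally smooth splines — in fact it even suffices to treat $k=-1$ if one wishes, but the cleanest route is the following. On the uniform partition, rescale each interval $I_j$ to the reference interval of length $1/n$. The key input is the optimal one-interval constant: by the results of Floater and Sande \cite{Floater:2017,Floater:2018}, the best $L^2$-error for approximating $H^{p+1}$ functions by degree-$p$ polynomials on an interval of length $h$ is bounded by $(h/\pi)^{p+1}\norm{\partial^{p+1} f}$, the extremal function being (a shifted, scaled) $\sin$. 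Summing the squared local errors over the $n$ intervals and using that the global $L^2$-projection onto $\spline pkn$ is at least as good as the piecewise (discontinuous, $k=-1$) $L^2$-projection gives $\norm{f-\proj pkn f}^2 \le \sum_j (h/\pi)^{2(p+1)} \norm{\partial^{p+1} f}_{L^2(I_j)}^2 = (n\pi)^{-2(p+1)}\norm{\partial^{p+1}f}^2$ with $h=1/n$. Taking square roots yields \eqref{est:-1} on the right.

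For the lower bound I would argue by exhibiting a single bad function, independent of $k$. A natural candidate is $f(x) = \sin\!\big((p+1)\pi n x\big)$ or, to avoid boundary issues with the projection, a degree-$(p+1)$ monomial-type bump; but the simplest is to take $f$ so that $\partial^{p+1} f$ is (piecewise) constant and $f$ has a definite sign pattern against $\spline pkn$. Concretely, since $\spline pkn \subseteq \spline p{-1}n$ would only help, I instead use $\spline pkn \supseteq \spline p{p-1}n$ the wrong way — so the right move is: the error $\norm{f-\proj pkn f}$ is the distance from $f$ to $\spline pkn$, which is \emph{largest} when the space is smallest, and among the $\spline pkn$ with $k\in\{-1,\dots,p-1\}$ the space $\spline p{p-1}n$ is the smallest; but we want a lower bound valid for \emph{all} $k$, hence it suffices to bound below the distance from a well-chosen $f$ to the \emph{largest} of these spaces, namely $\spline p{-1}n$. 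On $I_j$ this distance is the one-interval best polynomial error, and again by Floater–Sande the extremal lower constant for one interval of length $h$ is $\frac{(p+1)!}{(2p+2)!\sqrt{2p+3}}\,h^{p+1}$ (this is the $L^2$-norm of the normalized degree-$(p+1)$ Legendre-type residual, i.e.\ the reciprocal of the relevant eigenvalue). Choosing $f$ on $[0,1]$ to restrict on each $I_j$ to a translate of this one-interval extremizer — which is possible because the extremizer for the $n$-interval discontinuous problem decouples across intervals — and summing the squared local errors gives $\norm{f-\proj p{-1}n f}^2 = n\cdot\big(\tfrac{(p+1)!}{(2p+2)!\sqrt{2p+3}}\big)^2 h^{2(p+1)}\norm{\partial^{p+1}f}^2/n$ with $h=1/n$, i.e.\ exactly the stated constant, and since $\norm{f-\proj pkn f}\ge \norm{f-\proj p{-1}n f}$ for every $k\ge -1$...

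Here is the subtlety, which I expect to be the main obstacle: the inclusion $\spline pkn \supseteq \spline p{-1}n$ is \emph{false} — it goes the other way, $\spline p{p-1}n \subseteq \cdots \subseteq \spline p{-1}n$ — so the distance to $\spline pkn$ is \emph{at most}, not at least, the distance to $\spline p{-1}n$. Thus the honest lower bound must be proved for the smoothest space $\spline p{p-1}n$ directly, and the piecewise-decoupling trick no longer works: the extremal function for the global smooth $L^2$-projection does not split across intervals, and one must instead lower-bound the smallest relevant eigenvalue of the smooth projection problem. The clean way is to test against $f(x)=\cos\big(n\pi x\cdot c\big)$ or a Chebyshev-like function and estimate $\norm{f-\proj p{p-1}n f}$ from below by controlling the Fourier/Legendre coefficients; equivalently, one uses that $\const pkn$ is decreasing in $k$ only for fixed $n$, and that the worst case over $k$ for the lower bound is $k=p-1$, whose constant is bounded below by the known exact formula for the one-interval case via a scaling-and-periodization argument (the smooth spline space on $n$ uniform intervals, restricted suitably, still cannot reproduce a single high-frequency mode better than a single polynomial can on the whole interval). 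I would carry this out by: (i) reducing to $k=p-1$ for the lower bound and $k=p-1$ also suffices for the upper bound after the inclusion remark; (ii) on $[0,1]$ choosing $f_N$ with $\partial^{p+1}f_N$ equal to the $L^2$-normalized eigenfunction of the one-interval Floater–Sande problem rescaled by $n$; (iii) computing $\norm{f_N - \proj p{p-1}n f_N}$ exactly via orthogonality of the Legendre-type residuals on the uniform mesh; (iv) reading off the constant $\frac{(p+1)!}{(2p+2)!\sqrt{2p+3}}n^{-p-1}$, which is precisely $\sqrt{}$ of the reciprocal of the largest eigenvalue of the one-interval Gram matrix scaled by $n^{-2(p+1)}$.
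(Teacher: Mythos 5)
Your reduction of the upper bound goes in the wrong direction, and this is a genuine gap. Since $\spline p{p-1}n\subseteq\spline pkn\subseteq\spline p{-1}n$, projecting onto a smoother space gives a \emph{larger} error, so $\const p{-1}n\le\const pkn\le\const p{p-1}n$: the upper bound must be proved for $k=p-1$, and it does \emph{not} ``suffice to treat $k=-1$''. Your piecewise rescaling argument, which sums local one-interval errors over the $I_j$, only controls the discontinuous projection $\proj p{-1}n$ --- the step ``the global $L^2$-projection onto $\spline pkn$ is at least as good as the piecewise $k=-1$ projection'' is false, precisely because $\spline pkn$ is the smaller space. So you have proved $\const p{-1}n\le(n\pi)^{-p-1}$, which is the weakest of the claimed upper bounds and implies none of the others. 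The actual content of the right-hand inequality is $\const p{p-1}n\le(n\pi)^{-p-1}$, and there the error does not decouple across intervals. The paper obtains it from the Floater--Sande estimate on the boundary-constrained subspace $E^p\subset H^{p+1}$ (derivatives of appropriate parities vanishing at $0$ and $1$), combined with a Hermite-type polynomial correction $g\in\poly p$ chosen so that $f-g\in E^p$ (Lemmas \ref{lem:poly-interp} and \ref{lem:poly-interp2}); none of this machinery appears in your proposal, and even the one-interval constant $(h/\pi)^{p+1}$ for \emph{unconstrained} $H^{p+1}$ functions already requires it.

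For the lower bound you had the right argument and then talked yourself out of it. Because $\spline pkn\subseteq\spline p{-1}n$, the distance from $f$ to $\spline pkn$ is at \emph{least} (not at most) the distance to $\spline p{-1}n$; your inequality $\norm{f-\proj pkn f}\ge\norm{f-\proj p{-1}n f}$ was correct, and the ``subtlety'' you then raise rests on reversing it. Consequently the piecewise-decoupled lower bound for the discontinuous projection does transfer to every $k$, and no eigenvalue analysis of the smooth projection is needed. The remaining issue is your test function: gluing translates of a one-interval extremizer need not produce a function in $H^{p+1}(0,1)$. The paper's fix is to take $f(x)=x^{p+1}$, globally smooth with constant $(p+1)$-st derivative; on each $I_j$ its residual under $\proj p{-1}n$ is the degree-$(p+1)$ Legendre component, whose norms $\norm{\ell_{p+1}}=(2p+3)^{-1/2}$ and $\norm{\partial^{p+1}\ell_{p+1}}=(2p+2)!/(p+1)!$ give exactly the stated constant after summing squares over $j$. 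With these two repairs your outline becomes the paper's proof.
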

The above inequalities are shown in the following lemmas.

\begin{lemma}\label{lem:legendre-poly} For discontinuous spline approximation we have
$$
 \frac{(p+1)!}{(2p+2)!\sqrt{2p+3}}n^{-p-1} \le \const p{-1}n.
$$
\end{lemma}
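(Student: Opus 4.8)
The plan is to establish a lower bound for $\const p{-1}n$ by exhibiting a single "bad" function $f \in H^{p+1}$ for which the $L^2$ projection onto $\spline p{-1}n$ cannot be too accurate. Since $\const p{-1}n$ is the smallest constant for which \eqref{eq:estimate-form} holds over all of $H^{q+1}$ (with $q=p$), it suffices to find one $f$ with
$$
\const p{-1}n \ge \frac{\norm{f - \proj p{-1}n f}}{\norm{\partial^{p+1} f}}.
$$
The natural candidate is a function whose $(p+1)$-st derivative is constant on each segment $I_j$ — equivalently, a piecewise polynomial of degree $p+1$ with (possibly) jumps only in derivatives, chosen so that it is orthogonal on each $I_j$ to $\poly p$. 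That orthogonality is exactly what makes the projection error equal to the function itself restricted to each segment, which is the source of the lower bound.

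**First I would** reduce to a single interval by scaling. On $I_j = [j/n, (j+1)/n)$, write the local $L^2$ projection error; since $\spline p{-1}n$ imposes no continuity, $\proj p{-1}n$ acts segment-by-segment as the local $L^2$ projection onto $\poly p$. So
$$
\norm{f - \proj p{-1}n f}^2 = \sum_{j=0}^{n-1} \norm{f - \pr pj f}^2_{L^2(I_j)},
$$
and on each $I_j$ the error is minimized by (equivalently, orthogonal to $\poly p$, hence a scalar multiple of) the shifted, scaled Legendre polynomial of degree $p+1$. Concretely, take $f$ on $I_j$ to be the degree-$(p+1)$ Legendre polynomial $L_{p+1}$ transplanted from $[-1,1]$ to $I_j$ via the affine map, so that $f|_{I_j} \perp \poly p$ and $f - \proj p{-1}n f = f$. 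Then $\norm{f}^2 = n \cdot (\text{scale}) \cdot \norm{L_{p+1}}^2_{L^2(-1,1)}$ after accounting for the Jacobian $1/n$ per interval and the same contribution from each of the $n$ intervals, while $\partial^{p+1} f$ on each $I_j$ is the constant $(2n)^{p+1} \cdot \ell_{p+1}$, where $\ell_{p+1}$ is the leading coefficient of $L_{p+1}$.

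**The main computation** is then to assemble the ratio from the standard Legendre data: $\norm{L_{p+1}}^2_{L^2(-1,1)} = \tfrac{2}{2p+3}$ and the leading coefficient of $L_{p+1}$ is $\ell_{p+1} = \tfrac{(2p+2)!}{2^{p+1}((p+1)!)^2}$. A short calculation of $\norm{f} / \norm{\partial^{p+1} f}$ should collapse — after the powers of $2$ and $n$ cancel as they must by the homogeneity of the two sides under scaling $x \mapsto x/n$ — to exactly $\tfrac{(p+1)!}{(2p+2)!\sqrt{2p+3}} n^{-p-1}$. I expect the bookkeeping of constants (leading coefficient of $L_{p+1}$, the factor $2^{p+1}$ from the affine change of variables, and the $\sqrt{2p+3}$ from the $L^2$ normalization of $L_{p+1}$) to be the only delicate part; there is no real obstacle, since everything is an exact identity rather than an estimate. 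One point to be slightly careful about: one must check that this $f$, being only piecewise polynomial of degree $p+1$, genuinely lies in $H^{p+1}(0,1)$ — which it does precisely because $\partial^{p+1} f$ is a (bounded, piecewise constant) $L^2$ function and the lower-order derivatives may have jumps only at the breakpoints, contributing nothing singular to $\partial^{p+1} f$ in the distributional sense beyond an $L^\infty$ function; equivalently one takes $f$ to be an antiderivative matched so that $f \in H^{p+1}$, which is always possible since we are free to add to $f|_{I_j}$ any element of $\poly p$ without changing the projection error.
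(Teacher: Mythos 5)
Your proposal is correct and follows essentially the same route as the paper: the paper's proof likewise reduces to the per-segment orthogonality of the degree-$(p+1)$ Legendre polynomial to $\poly p$ and the same two Legendre constants ($L^2$ norm and leading coefficient), except that it takes $f(x)=x^{p+1}$, which is globally smooth and so avoids the $C^p$-patching your piecewise-Legendre test function requires. Note that your parenthetical claim that derivative jumps at the breakpoints contribute ``nothing singular'' to $\partial^{p+1}f$ is false as stated (such jumps produce Dirac terms, so the unpatched function is not in $H^{p+1}$), but your fallback --- adding elements of $\poly p$ on each $I_j$ so that $f$ becomes the $C^p$ antiderivative of the piecewise-constant $\partial^{p+1}f$, which changes neither the projection error nor $\norm{\partial^{p+1}f}$ --- repairs this.
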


\begin{proof}
It is enough to show that there exists an $f\in H^{p+1}$ such that 
$$
\norm{f-\proj p{-1}n f}\ge \frac{(p+1)!}{(2p+2)!\sqrt{2p+3}} n^{-p-1}  \norm{\partial^{p+1}f}.
$$
This is the case for $f(x)=x^{p+1}$. 
The projection $\proj p{-1}n$ acts independently on each $I_j$, $j=0,\dots,n-1$ and on $I_j$ we have 
$$
x^{p+1}= \sum_{i=0}^{p+1} c_{i,j}\,\ell_i (n x-j ) ,$$
where $\ell_i$ is the $i$-th Legendre polynomial on $[0,1]$.
Since $\ell_{p+1}(nx-j)$ is orthogonal to the polynomials of degree $p$ on $I_j$ we have
$$x^{p+1} -\proj p{-1}n x^{p+1}  =  \sum_{j=0}^{n-1} c_{p+1,j} \ell_{p+1}( n x-j ) .
$$
Since $$
\norm{\ell_{p+1}}=(\sqrt{2p+3})^{-1}\qquad \text{and}\qquad \norm{\partial^{p+1}\ell_{p+1}}=\frac{(2p+2)!}{(p+1)!}.
$$
 by taking the derivative of $\ell_{p+1}( n x-j )$ we obtain
$$
\norm{x^{p+1}-\proj p{-1}n x^{p+1}}_{I_j}=  \frac{(p+1)!}{(2p+2)!\sqrt{2p+3}}n^{-p-1} \norm{\partial^{p+1} x^{p+1}}_{I_j}.
$$
Summing over $j$ the squares of the left and right hand sides yields the result.
\end{proof}

\begin{lemma}\label{lem:univk}
For maximally smooth spline approximation we have
$$\const {p}{p-1}n \le (n\pi)^{-p-1}.$$
\end{lemma}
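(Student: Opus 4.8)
The plan is to obtain the bound $\const{p}{p-1}{n}\le(n\pi)^{-p-1}$ by iterating a one-step estimate in which each application of the $L^2$ projection picks up a factor $(n\pi)^{-1}$ together with one derivative. The key structural fact is that the space $\spline{p}{p-1}{n}$ of maximally smooth splines is closed under differentiation in the sense that $\partial \spline{p}{p-1}{n}\subseteq\spline{p-1}{p-2}{n}$, and that the $L^2$ projections onto these nested spaces commute with $\partial$ in the right way. More precisely, I would first establish the single inequality
\begin{equation*}
\norm{g-\proj{p}{p-1}{n}g}\le (n\pi)^{-1}\,\norm{\partial g - \proj{p-1}{p-2}{n}\partial g}
\end{equation*}
for all $g\in H^{1}$, and then apply it $p+1$ times, descending through degrees $p, p-1,\dots,0$, until the right-hand side becomes $(n\pi)^{-p-1}\norm{\partial^{p+1}f-\proj{-1}{-2}{n}\partial^{p+1}f}$; since the projection onto the zero space (or more simply, since $\norm{h-\Pi h}\le\norm h$ always) this is bounded by $(n\pi)^{-p-1}\norm{\partial^{p+1}f}$, which is the claim with $q=p$.

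To prove the one-step inequality I would use the characterization of the $L^2$ projection error in terms of the Fourier/eigenfunction expansion adapted to the periodic-type boundary structure, or more directly invoke the optimal constants of Floater and Sande \cite{Floater:2017,Floater:2018} referenced in the introduction: these give that the best $L^2$ approximation from $\spline{p}{p-1}{n}$ of a function whose derivative lies in the ``next'' spline space is controlled by $\pi^{-1}$ times the mesh length $1/n$ times the approximation error of the derivative. The essential analytic input is that the relevant eigenvalue problem on the uniform mesh has smallest eigenvalue exactly $(n\pi)^{2}$, which is where the constant $\pi^{-1}$ (rather than a worse constant) comes from; this is the point at which maximal smoothness is used, since for lower smoothness the first eigenvalue is smaller and the constant correspondingly worse.

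The main obstacle is establishing the commutation/nesting relation between the $L^2$ projections onto $\spline{p}{p-1}{n}$ and $\spline{p-1}{p-2}{n}$ under differentiation, together with the sharp value $\pi^{-1}$ of the one-step constant; everything else is a clean induction on $p$. One has to be careful that differentiation maps $\spline{p}{p-1}{n}$ \emph{onto} $\spline{p-1}{p-2}{n}$ (up to constants) and that the induced map on $L^2$-orthogonal complements does not lose a factor, so that the product of the $p+1$ one-step constants is exactly $(n\pi)^{-(p+1)}$ with no accumulated loss. Once the one-step bound is in hand with the correct constant, telescoping the inequality and bounding the final term by $\norm{\partial^{p+1}f}$ completes the proof, and by \eqref{eq:estimate-form} this is precisely the assertion $\const{p}{p-1}{n}\le(n\pi)^{-p-1}$.
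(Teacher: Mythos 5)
Your overall architecture --- iterating a one-step inequality that trades a factor $(n\pi)^{-1}$ for one derivative --- is viable; in fact it is exactly how the paper proves the more general Theorem~\ref{thm:lower-norm}, whose case $q=p$ recovers this lemma. The paper's own proof of Lemma~\ref{lem:univk} is different: it introduces the boundary-condition space $E^p=\{f\in H^{p+1}:\ \partial^s f(0)=\partial^s f(1)=0,\ 0\le s<p,\ s+p\text{ odd}\}$, quotes the Floater--Sande bound $(n\pi)^{-p-1}$ for the projection onto $E^p\cap\spline p{p-1}n$, and reduces a general $f\in H^{p+1}$ to this case by subtracting a polynomial $g\in\poly p$ matching the relevant boundary derivatives (so that $f-g\in E^p$ and $\partial^{p+1}(f-g)=\partial^{p+1}f$). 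That route gets the full power $(n\pi)^{-p-1}$ in one shot, at the price of the Hermite-type interpolation lemmas in the appendix; your route replaces that boundary bookkeeping by an induction on the degree.

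However, as written your argument has a genuine gap: the one-step inequality is asserted rather than proved, and it is precisely where all the difficulty sits. Two points need repair. First, the $L^2$ projections do not commute with $\partial$; the correct mechanism is to build an auxiliary (non-orthogonal) projection $Qg(x)=c+\int_0^x \proj{p-1}{p-2}{n}\partial g(z)\,dz$, observe that $Qg\in\spline p{p-1}n$ because antidifferentiation maps $\spline{p-1}{p-2}{n}$ into $\spline p{p-1}n$, and then estimate $\norm{g-\proj p{p-1}n g}=\norm{(g-Qg)-\proj p{p-1}n(g-Qg)}\le \cf p{p-1}n0\,\norm{\partial g-\proj{p-1}{p-2}n\partial g}$. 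Second, this reduces everything to the base estimate $\cf p{p-1}n0\le (n\pi)^{-1}$, which does \emph{not} follow by simply ``invoking'' \cite{Floater:2018}: for odd $p$ the optimal degree-$p$ space there lives on a different knot configuration and is not a subspace of the uniform-knot space $\spline p{p-1}n$, so the bound cannot be inherited by monotonicity of the projection error. The paper handles this in Lemma~\ref{lem:ineqH1} by extending the domain by half a mesh width on each side and using a constant extension operator; without that step (or some substitute) your induction has no anchor. Once these two points are supplied, your telescoping argument goes through exactly as you describe.
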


\begin{proof}
This is a corollary of the results in \cite{Floater:2018}.
Let 
$$E^{p}=\{f\in H^{p+1}:\quad \partial^{s} f(0)=\partial^{s} f(1)=0,\quad 0\leq s<  p,\quad s+p\text{ is  odd}  \}.$$
Observe that for $p$ odd, $E^p$ coincides with the non-standard Sobolev space $H^{p+1}_0$ defined in \cite{Floater:2018}, and for $p$ even it coincides with the space $H^{p+1}_1$ in that paper.

Then \cite[Theorems 1 and 2]{Floater:2018} states that for all $v\in E^p$
\begin{equation}\label{eq:Floater-result}
\norm{v-\Pi_E v}\le \Big(\frac{1}{n\pi}\Big)^{p+1}\norm{\partial^{p+1}v},
\end{equation}
where $\Pi_E :E^p\to E^p\cap \spline p{p-1}n$ is the $L^2$ projection.
Note that the $n$ in \cite{Floater:2018} is the space dimension, what we call $N$, and not the number of segments.

Given $f\in H^{p+1}$ let $g\in \poly p$ be a polynomial such that $f-g\in E^p$. In other words, for $0\le s < p$ with $s+p$ odd, we have
$$
\left\{\begin{aligned}
\partial^s g(0)&=\partial^s f(0)\\
\partial^s g(1)&=\partial^s f(1).
\end{aligned}\right .
$$
This $g$ exists according to Lemmas \ref{lem:poly-interp} and \ref{lem:poly-interp2} in the appendix.
Then, since $g\in\spline p{p-1}n$ and $f-g\in E^p$ we have
$$
\begin{aligned}
\norm{f-\proj p{p-1}n f}&= \norm{(f-g) - \proj p{p-1}n  (f-g)}\\
	&\le \norm{(f-g) - \Pi_E (f-g)}\\
	&\le  \Big(\frac{1}{n\pi}\Big)^{p+1}\norm{\partial^{p+1}(f-g)}\\
	&= \Big(\frac{1}{n\pi}\Big)^{p+1}\norm{\partial^{p+1}f}.
\end{aligned}
$$
\end{proof}

Theorem \ref{thm:bounds} now follows from the observation that $\spline p {k+1}n\subset\spline p {k}n$ for all $k=-1,0,\ldots,p-2$ and so $\const pkn\le \const p{k+1}n$.

\section{Univariate comparisons}
\label{sec:comparison-1d}

Here we compare the space of maximally smooth splines, $\spline p{p-1}m$, commonly used in IGA, with the space $\spline pkn$ of smoothness $k<p-1$ where $m$ depends on $n$ as in \eqref{eq:m-formula}, i.e., such that $\dim\spline p{p-1}m=\dim\spline pkn$. This means that the smoother space is defined on a finer grid. See Figure \ref{fig:deg1-3} for an example of this. Note that the case $k=p-1$ and the case $n=1$ are uninteresting since we would then be comparing $\spline pkn$ with itself.

The estimates in Section \ref{sec:univ} are sharp enough to prove that smooth splines will eventually provide a better approximation in the number of degrees of freedom.
This is stated in the following theorem.
More precise statements for the IGA-FEM comparison ($k=0$) and the IGA-DG comparison ($k=-1$) are contained in subsections 3.1 and 3.2.

\begin{figure}
\includegraphics{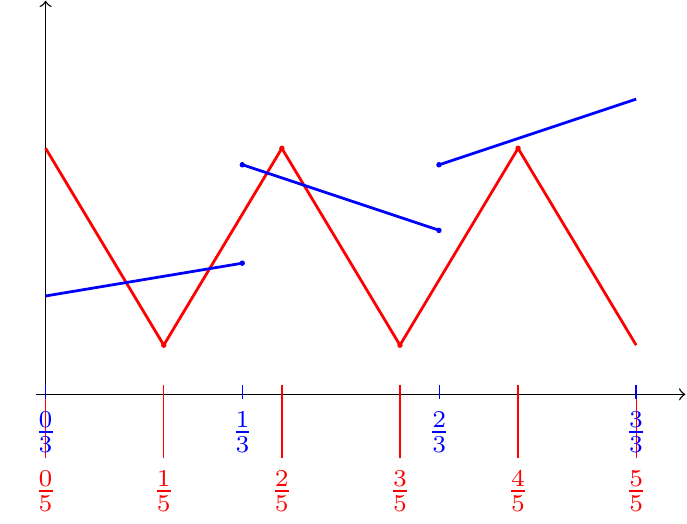} \includegraphics{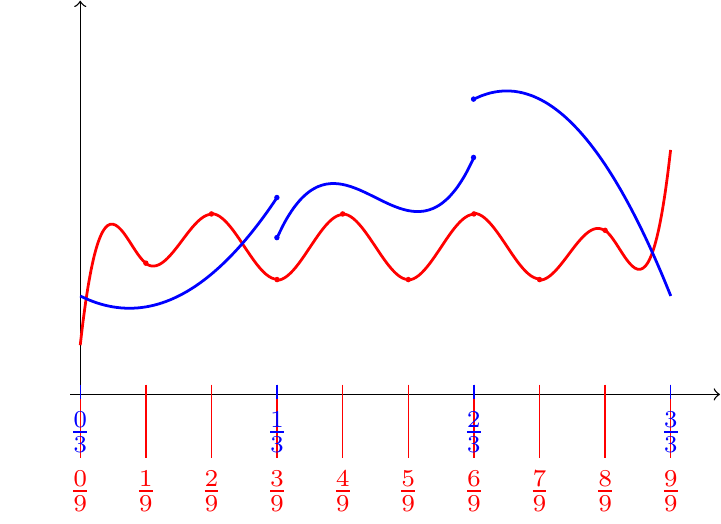}
\caption{Example of pairs of functions in $\spline p{-1}n$ (blue) and $\spline p{p-1}m$ (red) for $p=1$ and $p=3$, $n=3$. Note how the maximally smooth spline space is defined on a finer grid.}\label{fig:deg1-3}
\end{figure}

\begin{theorem}\label{thm:comparison-general}
For all $k\geq -1$ and $n\ge 2$ there exists $\bar p$ such that for all $p\ge \bar p$  
$$
\const p{p-1}m<\const pkn,
$$
where $m=(n-1)(p-k)+1$.
\end{theorem}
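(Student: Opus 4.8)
The plan is to use the two-sided bounds from Theorem~\ref{thm:bounds} directly. Recall that for the smoother space on $m$ segments, Lemma~\ref{lem:univk} gives the upper bound $\const p{p-1}m \le (m\pi)^{-p-1}$, while for the less smooth space on $n$ segments, Lemma~\ref{lem:legendre-poly} (combined with the monotonicity $\const pkn \ge \const p{-1}n$ used at the end of Section~\ref{sec:univ}) gives the lower bound $\const pkn \ge \frac{(p+1)!}{(2p+2)!\sqrt{2p+3}}\,n^{-p-1}$. So it suffices to show that for $p$ large enough,
\begin{equation}\label{eq:suff-comp}
(m\pi)^{-p-1} < \frac{(p+1)!}{(2p+2)!\sqrt{2p+3}}\,n^{-p-1},
\end{equation}
with $m=(n-1)(p-k)+1$.

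First I would rearrange \eqref{eq:suff-comp} into the form
\[
\Big(\frac{n}{m}\Big)^{p+1} < \frac{(p+1)!\,\pi^{p+1}}{(2p+2)!\,\sqrt{2p+3}}.
\]
The left-hand side is bounded: since $m=(n-1)(p-k)+1$ grows linearly in $p$ while $n$ is fixed, $n/m \to 0$, so in fact $(n/m)^{p+1}\to 0$ faster than geometrically. More crudely, for $p$ large we have $m \ge p$ (using $n\ge 2$, so $n-1\ge 1$ and $p-k \ge p$), hence $(n/m)^{p+1} \le (n/p)^{p+1}$, which is eventually smaller than $1$ and in fact decays super-exponentially. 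Next I would estimate the right-hand side from below: by Stirling's formula $\frac{(p+1)!}{(2p+2)!} = \frac{1}{(p+2)(p+3)\cdots(2p+2)}$, and this behaves like $C\cdot 4^{-p} p^{-1/2}$ up to polynomial factors, so the whole right-hand side is bounded below by something of order $(\pi/4)^{p+1}$ up to polynomial-in-$p$ corrections. Since $\pi/4 < 1$, the right-hand side decays geometrically but only at the fixed rate $\pi/4$, whereas the left-hand side decays like $(n/p)^{p+1}$, i.e. super-exponentially. Therefore for all sufficiently large $p$ the inequality holds, and one takes $\bar p$ to be any such threshold (which may depend on $k$ and $n$).

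I do not expect a genuine obstacle here; the result is essentially a soft asymptotic comparison of a super-exponentially small quantity against a merely geometrically small one. The only point requiring a little care is making the dependence of $\bar p$ on $k$ and $n$ explicit enough to be honest — but since the statement only claims existence of $\bar p$, one may simply take logarithms of both sides of \eqref{eq:suff-comp}, obtaining a comparison of the form $(p+1)\log m$ versus $-(p+1)\log(n\pi) + \log\frac{(2p+2)!\sqrt{2p+3}}{(p+1)!}$, note that the right side grows like $(p+1)\log\frac{4}{\pi}$ plus lower-order terms while $\log m \sim \log p \to \infty$, and conclude that the inequality fails only for finitely many $p$. The finer, more quantitative versions needed to pin down $\bar p$ in the practically interesting regimes are exactly what Theorems~\ref{thm:fem} and~\ref{thm:dg} will provide.
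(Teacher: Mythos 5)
Your overall strategy is the same as the paper's (compare the upper bound for $\const p{p-1}m$ from Lemma~\ref{lem:univk} against the lower bound for $\const pkn$ from Lemma~\ref{lem:legendre-poly}, and let $p\to\infty$), but there is a genuine error at the decisive step: your asymptotic for the factorial ratio is wrong. You claim $\frac{(p+1)!}{(2p+2)!}\sim C\,4^{-p}$ up to polynomial factors, so that the right-hand side of your inequality decays only geometrically at rate $\pi/4$. In fact $\frac{(p+1)!}{(2p+2)!}=\frac{1}{\binom{2p+2}{p+1}\,(p+1)!}$; you have dropped the factor $\frac{1}{(p+1)!}$. By Stirling (as the paper computes in Lemma~\ref{thm:ratio}),
\[
\frac{(2p+2)!}{(p+1)!}\le \Big(\frac{4}{e}\Big)^{p+1}\sqrt{2}\,(p+1)^{p+1},
\]
so the right-hand side of your \eqref{eq:suff-comp} is of order $\bigl(\tfrac{e\pi}{4(p+1)}\bigr)^{p+1}$ — it is \emph{also} super-exponentially small, with the same $p^{-p}$ rate as your left-hand side $(n/m)^{p+1}\sim\bigl(\tfrac{n}{(n-1)(p+1)}\bigr)^{p+1}$. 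The comparison is therefore not ``soft'': the $(p+1)^{-(p+1)}$ factors cancel and everything hinges on whether the ratio of the remaining geometric bases,
\[
\frac{4}{e\pi}\,\frac{n}{n-1},
\]
is strictly less than $1$. This is exactly the quantity $\lim_{p\to\infty}\B pkn$ in point~\ref{p-lim} of Proposition~\ref{prop:B}, and for $n=2$ it equals $8/(e\pi)\approx 0.9368$, so the conclusion rests on the numerical fact $e\pi>8$ and on the hypothesis $n\ge 2$ — neither of which your argument invokes where it matters. (The same error reappears in your logarithmic version: $\log\frac{(2p+2)!}{(p+1)!}$ grows like $(p+1)\log(p+1)+(p+1)\log\frac4e$, not like $(p+1)\log 4$.)

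The fix is exactly the paper's route: carry out the Stirling estimate correctly to obtain $\frac{\const p{p-1}m}{\const pkn}\le (\B pkn)^{p+1}\sqrt{4p+6}$ with $\B pkn=\frac{4}{e\pi}\frac{n(p+1)}{(p-k)(n-1)+1}$, observe that for fixed $k$ and $n\ge2$ this base tends to $\frac{4}{e\pi}\frac{n}{n-1}<1$, and conclude that the bound is eventually below $1$. A minor additional slip: your justification ``$p-k\ge p$'' for $m\ge p$ only holds for $k\le 0$; for fixed $k\ge1$ one still has $m\sim (n-1)p$, so this does not affect the asymptotics, but it should be stated for general fixed $k$.
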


This theorem follows from studying the bounds in Theorem \ref{thm:bounds}, which is done in Lemma \ref{thm:ratio} and Proposition \ref{prop:B}.

\begin{lemma}\label{thm:ratio}
For all $p\ge 0$, $k\in\{-1,\dots,p-1\}$ and $n,m\ge 1$ we have
\begin{equation}\label{eq:ratio}
\frac{\const p{p-1}m}{\const pkn} \le \Big(\frac{4}{e\pi}\Big)^{p+1}\Big(\frac{n}{m}\Big)^{p+1} (p+1)^{p+1}\sqrt{4p+6}.
\end{equation}
\end{lemma}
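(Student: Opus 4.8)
The plan is to combine the two bounds from Theorem~\ref{thm:bounds} directly. From \eqref{est:-1} the denominator $\const pkn$ is bounded below by
$$
\const pkn \ge \frac{(p+1)!}{(2p+2)!\sqrt{2p+3}}\, n^{-p-1},
$$
and from Lemma~\ref{lem:univk} the numerator satisfies $\const p{p-1}m \le (m\pi)^{-p-1}$. Dividing one by the other gives
$$
\frac{\const p{p-1}m}{\const pkn} \le \frac{(2p+2)!\sqrt{2p+3}}{(p+1)!}\cdot\frac{n^{p+1}}{\pi^{p+1} m^{p+1}},
$$
so the whole task reduces to bounding the arithmetic factor $\dfrac{(2p+2)!}{(p+1)!\,\pi^{p+1}}$ by $\bigl(\tfrac{4}{e\pi}\bigr)^{p+1}(p+1)^{p+1}\sqrt{2}$; the $\sqrt{2p+3}\le\sqrt{2}\sqrt{2p+3}$... more precisely one needs $\sqrt{2p+3}\cdot\sqrt{2}=\sqrt{4p+6}$ to match the stated $\sqrt{4p+6}$, so I would keep the $\sqrt{2p+3}$ as is and produce an extra $\sqrt{2}$ from the factorial estimate.

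The key step is therefore the elementary inequality
$$
\frac{(2p+2)!}{(p+1)!} \le \sqrt{2}\left(\frac{4(p+1)}{e}\right)^{p+1}.
$$
I would prove this via Stirling-type bounds. Writing $N=p+1$, we have $(2N)!/N! = N!\binom{2N}{N}$, and using the standard two-sided Stirling estimate $\sqrt{2\pi}\,k^{k+1/2}e^{-k}\le k! \le e\,k^{k+1/2}e^{-k}$ together with $\binom{2N}{N}\le 4^N$ gives $(2N)!/N! \le e\, N^{N+1/2} e^{-N} 4^N = e\sqrt{N}\,(4N/e)^N$. This is slightly weaker than what is claimed (it has $e\sqrt{N}$ instead of $\sqrt{2}$), so I would instead estimate $(2N)!/N!$ directly: $(2N)!/N! = \prod_{j=N+1}^{2N} j \le (2N)^N$, which is too lossy, or better, bound $(2N)!$ and $N!$ each by the sharp Stirling form $k!\le \sqrt{2\pi k}\,(k/e)^k e^{1/(12k)}$. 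Then $(2N)!/N! \le \sqrt{2}\,(2N/e)^{2N} e^{1/(24N)} / \bigl[(N/e)^N e^{-1/(12N)}\bigr]\cdot$ (correcting the $\sqrt{2\pi}$ factors, which cancel to give exactly $\sqrt{2}$) $= \sqrt{2}\, 4^N N^N e^{-N} \cdot e^{1/(24N)+1/(12N)}$, and the exponential correction is $\le e^{1/8}<\sqrt{2}$ only for... — here I would need to be careful and check small cases $p=0,1,2$ by hand and use the monotone decay of the correction term for larger $p$.

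Assembling: once the factorial bound is in hand, multiply through by $\sqrt{2p+3}$ and $n^{p+1}/(\pi^{p+1}m^{p+1})$ to obtain
$$
\frac{\const p{p-1}m}{\const pkn} \le \sqrt{2}\left(\frac{4(p+1)}{e}\right)^{p+1}\sqrt{2p+3}\cdot\frac{n^{p+1}}{\pi^{p+1}m^{p+1}} = \left(\frac{4}{e\pi}\right)^{p+1}\left(\frac{n}{m}\right)^{p+1}(p+1)^{p+1}\sqrt{4p+6},
$$
which is exactly \eqref{eq:ratio}. The main obstacle is purely the sharpness of the Stirling estimate: getting the clean constant $\sqrt{2}$ (rather than something like $e$ or $\sqrt{2\pi}$) requires using the two-sided Stirling bounds with the $1/(12k)$ error terms and verifying that the leftover exponential factor stays below the remaining slack, possibly with a finite case check for small $p$. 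Everything else is immediate substitution from Theorem~\ref{thm:bounds} and Lemma~\ref{lem:univk}.
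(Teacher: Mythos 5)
Your reduction is exactly the paper's: divide the upper bound of Lemma~\ref{lem:univk} by the lower bound of Lemma~\ref{lem:legendre-poly}, and the whole lemma collapses to the factorial inequality $\frac{(2p+2)!}{(p+1)!}\le \sqrt{2}\,\bigl(\tfrac{4(p+1)}{e}\bigr)^{p+1}$. That inequality is true, and it is the right target. The gap is in how you propose to prove it. Writing $N=p+1$, you bound the numerator by $\sqrt{4\pi N}(2N/e)^{2N}e^{1/(24N)}$ and the denominator from below by $\sqrt{2\pi N}(N/e)^N e^{-1/(12N)}$, which leaves a correction factor $e^{1/(24N)+1/(12N)}=e^{1/(8N)}$. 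This factor is strictly greater than $1$ for \emph{every} $N$, and the target constant $\sqrt{2}$ has no slack to absorb it (e.g.\ for $N=1$ your route yields $\approx 2.36$ against the target $4\sqrt2/e\approx 2.08$, while the true value is $2$). So neither a finite case check for small $p$ nor ``monotone decay'' of the correction can rescue the argument: the correction decays to $1$ from above, so your Stirling estimate overshoots the target for all $p$ and proves the inequality for none.

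The fix is the one the paper uses: take the Robbins two-sided form
$\sqrt{2\pi}\,r^{r+1/2}e^{-r}e^{1/(12r+1)}\le r!\le \sqrt{2\pi}\,r^{r+1/2}e^{-r}e^{1/(12r)}$,
i.e.\ use the sharper lower bound with exponent $\tfrac{1}{12(p+1)+1}$ for $(p+1)!$ (not $-\tfrac{1}{12(p+1)}$). The leftover exponential is then
$e^{\frac{1}{24(p+1)}-\frac{1}{12(p+1)+1}}\le 1$ for all $p\ge 0$, since $12(p+1)+1<24(p+1)$, and the clean constant $\sqrt{2}$ (hence $\sqrt{4p+6}$ after multiplying by $\sqrt{2p+3}$) comes out with no case analysis. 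Everything else in your write-up — the use of Theorem~\ref{thm:bounds} for both numerator and denominator and the final assembly — matches the paper's proof.
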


\begin{proof}
From Theorem \ref{thm:bounds} we have for $k=-1,\dots,p-1$, that
\begin{equation*}
\frac{\const p{p-1} m}{\const pkn} \le \Big(\frac{n}{m\pi}\Big)^{p+1} \frac{(2p+2)!}{(p+1)!}\sqrt{2p+3}.
\end{equation*}
Now, using the error bounds of the Stirling's approximation~\cite{Robbins:55}
\begin{equation}\label{ineq:Stirling}
\sqrt{2\pi}r^{r+\frac{1}{2}}e^{-r}e^{\frac{1}{12r+1}}\le r! \le \sqrt{2\pi}r^{r+\frac{1}{2}}e^{-r}e^{\frac{1}{12r}},
\end{equation}
we find that 
\begin{align*}
\frac{(2p+2)!}{(p+1)!} &\le \frac{\sqrt{2\pi}(2p+2)^{2(p+1)+\frac{1}{2}}e^{-2(p+1)}e^{\frac{1}{12(2p+2)}}}{\sqrt{2\pi}(p+1)^{p+1+\frac{1}{2}}e^{-p-1}e^{\frac{1}{12(p+1)+1}}},\\
&=2^{2(p+1)+\frac{1}{2}}
\frac{(p+1)^{2(p+1)+\frac{1}{2}}}{(p+1)^{p+1+\frac{1}{2}}}
\frac{e^{-2(p+1)}}{e^{-p-1}}
\frac{e^{\frac{1}{12(2p+2)}}}{e^{\frac{1}{12(p+1)+1}}},\\
&=4^{p+1}\sqrt{2}(p+1)^{p+1} e^{-p-1}
e^{\frac{1}{24(p+1)}-\frac{1}{12(p+1)+1}},\\
&=\Big(\frac{4}{e}\Big)^{p+1}\sqrt{2}(p+1)^{p+1}e^{\frac{1-12(p+1)}{24(p+1)(1+12(p+1))}},\\
&\le \Big(\frac{4}{e}\Big)^{p+1}\sqrt{2}(p+1)^{p+1},
\end{align*}
and the result follows.
\end{proof}

For $m$ as in  \eqref{eq:m-formula}, we let $\rt pnk$ be the bound in \eqref{eq:ratio}, now given as
\begin{align}\label{eq:ratio-m}
\rt pkn=(\B pkn)^{p+1}  \sqrt{4p+6}\qquad\text{with}\\
\label{eq:base-m}
\B pkn = \frac{4}{e\pi}  \frac{n (p+1)}{(p-k)(n-1)+1} .
\end{align}
The next step of our analysis is the study of $\B pkn$.

\begin{proposition}\label{prop:B}
For $-1\leq k <p-1$ and $n\ge 2$  the following statements hold
\begin{enumerate}
\item \label{p-lim} for fixed $n$ and $k $  $$\lim_{p\to\infty} B_{p,k,n}= \frac {4 } {e\pi}\frac n{n-1}<1;$$
\item \label{n-lim} for fixed $p$ and $k$, $$\lim_{n\to\infty} B_{p,k,n}= \frac {4 } {e\pi }\frac{p+1}{p-k};$$
\item\label{n-decreasing} $B_{p,k,n}$ is strictly decreasing in $n$;
\item\label{p-decreasing} $B_{p,k,n}$ is decreasing in $p$ for $k\ge 0$.
\end{enumerate}
\end{proposition}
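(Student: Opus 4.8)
The plan is to analyze the explicit formula
$$\B pkn = \frac{4}{e\pi}\cdot\frac{n(p+1)}{(p-k)(n-1)+1}$$
directly, treating each of the four claims in turn; all four are elementary consequences of the structure of this rational expression, and the only mildly delicate point is the monotonicity in $p$.

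\emph{Parts \ref{p-lim} and \ref{n-lim} (the limits).} For part \ref{p-lim}, with $n$ and $k$ fixed, I would write
$$\B pkn = \frac{4}{e\pi}\cdot\frac{n(p+1)}{(p-k)(n-1)+1} = \frac{4}{e\pi}\cdot\frac{n\left(1+\tfrac{1}{p}\right)}{(n-1)\left(1-\tfrac{k}{p}\right)+\tfrac{1}{p}}$$
after dividing numerator and denominator by $p$; letting $p\to\infty$ gives $\frac{4}{e\pi}\cdot\frac{n}{n-1}$. The strict inequality $\frac{4}{e\pi}\cdot\frac{n}{n-1}<1$ for $n\ge 2$ follows because the expression is decreasing in $n$ (part \ref{n-decreasing} applied in the limit, or directly) so its maximum over $n\ge 2$ is at $n=2$, giving $\frac{8}{e\pi}\approx 0.937<1$. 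For part \ref{n-lim}, fixing $p$ and $k$ and dividing through by $n$ gives $\B pkn = \frac{4}{e\pi}\cdot\frac{(p+1)}{(p-k)(1-1/n)+1/n}\to \frac{4}{e\pi}\cdot\frac{p+1}{p-k}$ as $n\to\infty$.

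\emph{Part \ref{n-decreasing} (strict decrease in $n$).} Here I treat $n$ as a real variable $t\ge 1$ and look at $g(t)=\frac{t}{(p-k)(t-1)+1}=\frac{t}{(p-k)t-(p-k)+1}$. Differentiating,
$$g'(t)=\frac{\bigl[(p-k)t-(p-k)+1\bigr]-t(p-k)}{\bigl[(p-k)t-(p-k)+1\bigr]^2}=\frac{1-(p-k)}{\bigl[(p-k)t-(p-k)+1\bigr]^2}.$$
Since $k<p-1$ we have $p-k\ge 2$, so $1-(p-k)\le -1<0$ and $g$ is strictly decreasing; multiplying by the positive constant $\frac{4(p+1)}{e\pi}$ preserves this, proving the claim.

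\emph{Part \ref{p-decreasing} (decrease in $p$ for $k\ge 0$).} This is the step I expect to be the main obstacle, since now $p$ appears in three places. I would treat $p$ as a real variable and study $h(p)=\frac{p+1}{(p-k)(n-1)+1}$. Writing $a=n-1\ge 1$, the denominator is $a(p-k)+1=ap+(1-ak)$, so $h(p)=\frac{p+1}{ap+(1-ak)}$ and
$$h'(p)=\frac{a\bigl[ap+(1-ak)\bigr]-a(p+1)\cdot 1}{\bigl[ap+(1-ak)\bigr]^2}
=\frac{a\bigl[(1-ak)-1\bigr]}{\bigl[ap+(1-ak)\bigr]^2}=\frac{-a^2 k}{\bigl[ap+(1-ak)\bigr]^2}.$$
Wait---more carefully: $a\cdot[ap+(1-ak)] - a(p+1) = a^2 p + a(1-ak) - ap - a = a^2 p - ap - a^2 k = a p(a-1) - a^2 k$, so $h'(p)$ has the sign of $p(a-1)-ak$. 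For $a=1$ (i.e.\ $n=2$) this is $-k\le 0$. For $a\ge 2$ this is $p(a-1)-ak$; since $k\le p-2<p$ and we want this $\le 0$... hmm, this needs $p(a-1)\le ak$, i.e.\ $k\ge p(1-1/a)=p\frac{n-2}{n-1}$, which is \emph{not} automatic. So the naive derivative sign check fails and something sharper is needed: one must use that $n,k,p$ are \emph{integers} and that the claim only asserts ``decreasing'' (not strictly), comparing $\B pkn$ with $\B {p+1}kn$ directly rather than via calculus. Concretely I would show $\B {p+1}kn \le \B pkn$ by cross-multiplying: this reduces to $(p+2)\bigl[(p-k)(n-1)+1\bigr]\le (p+1)\bigl[(p+1-k)(n-1)+1\bigr]$, and expanding both sides the inequality becomes $(n-1)(p-2k) \ge -1$ after cancellation, equivalently $(n-1)(2k-p)\le 1$. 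For $k\ge 1$ and $p\ge 2k$ the left side is negative and we are done; the remaining range $k\le p < 2k$ (with $k\ge 1$) together with $k\le p-1$ forces $k+1\le p\le 2k-1$, and there $(n-1)(2k-p)$ can be large, so this still looks problematic---indicating the statement as I've transcribed it may rely on a subtler argument or a mild extra hypothesis. I would therefore double-check the expansion, and if the clean integer inequality $(n-1)(2k-p)\le 1$ does hold in all the relevant cases I would present exactly that two-line cross-multiplication argument; the core difficulty is pinning down precisely why the three competing $p$-dependencies balance out, and the cleanest route is the explicit comparison of consecutive integer values of $p$ rather than differentiation.
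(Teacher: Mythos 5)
Your parts \ref{p-lim}--\ref{n-decreasing} are correct and essentially the paper's own argument (the paper dismisses the two limits as straightforward and verifies part \ref{n-decreasing} by the same cross-multiplication, reducing it to $k<p-1$). The gap is in part \ref{p-decreasing}, which you leave unresolved: both of your computations there contain algebra slips, and once these are fixed the claim goes through cleanly with no extra hypotheses. In the derivative you applied the quotient rule with an extraneous factor of $a$ on the first term; the correct numerator of $h'(p)$ is $\bigl[ap+(1-ak)\bigr]-a(p+1)=1-a(k+1)$, which is $\le 0$ precisely when $(n-1)(k+1)\ge 1$, i.e.\ for all $n\ge 2$ and $k\ge 0$. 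Likewise, the cross-multiplied comparison does not reduce to $(n-1)(p-2k)\ge -1$: expanding,
$$(p+1)\bigl[(p+1-k)(n-1)+1\bigr]-(p+2)\bigl[(p-k)(n-1)+1\bigr]=(n-1)(k+1)-1,$$
because the coefficient of $(n-1)$ is $(p+1)(p+1-k)-(p+2)(p-k)=k+1$ (all $p$-dependent terms cancel identically), and the remaining constant contribution is $(p+1)-(p+2)=-1$. Hence $\B {p+1}kn\le \B pkn$ is equivalent to $(k+1)(n-1)\ge 1$, which holds for $n\ge 2$ and $k\ge 0$; this is exactly the paper's two-line argument. Your instinct that the three competing $p$-dependencies must balance out is right---they cancel exactly---but your suspicion that the statement needs a subtler argument or an additional hypothesis is an artifact of the miscalculation. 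With this correction your proof is complete and coincides with the paper's.
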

\begin{proof}
The limits in points \ref{p-lim} and \ref{n-lim} are straightforward.

For \ref{n-decreasing} it is sufficient to show that $\B p{n+1}k<\B {p}nk$, i.e.,
\begin{align*}
&\frac{n+1}{(p-k)n+ 1}<\frac{n}{(p-k)(n-1) + 1},
\end{align*}
which is equivalent to $k<p-1$, since the denominators are positive.

For \ref{p-decreasing} we prove that $\B {p+1}nk \leq \B pnk$, i.e.,
\begin{align*}
 \frac{p+2}{(p-k+1)(n-1)+1} \leq \frac{p+1}{(p-k)(n-1)+1}.
\end{align*}
This is equivalent to $(k+1)(n-1)\geq 1$, which holds for $n\geq 2$ and $k\geq 0$.
\end{proof}

\begin{proof}[Proof of Theorem \ref{thm:comparison-general}]
From point \ref{p-lim} of Proposition \ref{prop:B} we deduce that for $p>\hat p$ we have $\B pkn \le t<1$ and
$$
\rt pkn = (\B pkn)^{p+1}\sqrt{4p+6}\le t^{p+1}\sqrt{4p+6}.
$$
Thus there exists $\bar p\ge \hat p$ such that for all $p>\bar p$, $\rt pkn <1$.
\end{proof}

\begin{remark}\label{rem:exp}
Using Proposition \ref{prop:B} we can obtain an estimate of how much better  the approximation with smooth splines is in Theorem \ref{thm:comparison-general}.
For a fixed $k$, and any $\bar p,\bar n$ satisfying $\B {\bar p}k{\bar n}\leq \frac{4}{e\pi}\gamma<1$ we have
\begin{equation}\label{ineq:exp}
\rt pkn \leq \Big ( \frac 4{e\pi}\gamma \Big)^{p+1} \sqrt{4p+6},\qquad \forall n\geq\bar n, \, \forall p\geq\bar p,
\end{equation}
i.e. $\rt pkn$ gets exponentially smaller as $p$ increases.
The level set $\B pkn=\frac{4}{e\pi}\gamma$ is the hyperbola
\begin{align*}
0 = \big(n -\frac {\gamma}{\gamma-1} \big)\big(p-\frac{\gamma k+1}{\gamma-1} \big) + \frac{\gamma(\gamma-k-2)}{(\gamma-1)^2}
\end{align*}
and has asymptotes
\begin{align*}
p= \frac{\gamma k+1}{\gamma-1}, \qquad n=\frac \gamma{\gamma-1}.
\end{align*}
This tells us that for each $\bar n\geq \frac \gamma{\gamma-1}$ there is a corresponding $\bar p$ such that we obtain the exponential improvement in \eqref{ineq:exp}. 
\end{remark}

\begin{corollary}\label{thm:decreasing-n}
For all $p\ge1$ and $k=-1,\dots,p-2$,  the ratio $\rt pkn$ in \eqref{eq:ratio-m} is strictly decreasing in $n$.
\end{corollary}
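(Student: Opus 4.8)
The plan is to reduce the statement directly to Proposition \ref{prop:B}, since almost all of the work has already been done there. Recall from \eqref{eq:ratio-m} that
$$\rt pkn = (\B pkn)^{p+1}\sqrt{4p+6},$$
where $\sqrt{4p+6}$ is a strictly positive constant independent of $n$. Hence it suffices to show that $n\mapsto (\B pkn)^{p+1}$ is strictly decreasing whenever $k\in\{-1,\dots,p-2\}$.

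First I would invoke point \ref{n-decreasing} of Proposition \ref{prop:B}: for $-1\le k<p-1$ — equivalently $k\in\{-1,\dots,p-2\}$ — the quantity $\B pkn$ is strictly decreasing in $n$. One small thing to note is that the proof of that point only uses $k<p-1$ together with positivity of the denominators $(p-k)(n-1)+1$, so the conclusion is in fact valid for all integers $n\ge 1$, which is the range relevant to a corollary about monotonicity in $n$. Since moreover $\B pkn>0$ for all admissible $p,k,n$, and the map $t\mapsto t^{p+1}$ is strictly increasing on $[0,\infty)$ for $p+1\ge 1$ (i.e. $p\ge 0$, which holds since $p\ge 1$), composition yields that $n\mapsto(\B pkn)^{p+1}$ is strictly decreasing.

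Finally, multiplying a strictly decreasing positive function of $n$ by the positive, $n$-independent factor $\sqrt{4p+6}$ preserves strict monotonicity, so $\rt pkn$ is strictly decreasing in $n$, which is the assertion of the corollary. There is no real obstacle here; the only point needing (very minor) attention is the observation above that Proposition \ref{prop:B}\,\ref{n-decreasing} is applicable for all $n\ge 1$ and not merely $n\ge 2$, and this is immediate from re-reading its proof, where the equivalence with $k<p-1$ holds for every $n\ge 1$.
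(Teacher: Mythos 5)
Your proposal is correct and follows essentially the same route as the paper: reduce to point \ref{n-decreasing} of Proposition \ref{prop:B} and observe that raising the positive, strictly decreasing quantity $\B pkn$ to the power $p+1$ and multiplying by the $n$-independent factor $\sqrt{4p+6}$ preserves strict monotonicity. Your additional remark that the monotonicity of $\B pkn$ in fact holds for all $n\ge 1$ (not just $n\ge2$ as stated in the proposition) is a valid, if minor, point of care.
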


\begin{proof}
By definition
$$
\rt pkn=(\B pkn)^{p+1}  \sqrt{4p+6}
$$
and $\B pkn^{p+1}$ is strictly decreasing in $n$ by point \ref{n-decreasing} of Proposition \ref{prop:B}.
\end{proof}

\begin{corollary}\label{thm:decreasing-p}
For all $k\ge 0$,  $\rt pkn$ is strictly decreasing in $p$ for all $p\ge \bar p$ where $\bar p$ is such that $\rt {\bar p}kn\le1$.
\end{corollary}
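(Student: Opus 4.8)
The approach I would take is to reduce the statement to a one-step monotonicity claim and then iterate. Concretely, I would prove that for $k\ge 0$ (and $n\ge 2$, as implicit in Proposition~\ref{prop:B}), whenever $\rt pkn\le 1$ one has $\rt{p+1}kn<\rt pkn$. Granting this, the corollary is immediate by induction starting at $p=\bar p$: the step decreases $\rt{\cdot}{k}{n}$ \emph{and} reproduces the hypothesis ``$\rt{\cdot}{k}{n}\le 1$'' for the next value of $p$, so $\rt pkn$ is strictly decreasing on $\{p:\ p\ge\bar p\}$.

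For the one-step claim I would start from the definition \eqref{eq:ratio-m} and write
\[
\frac{\rt{p+1}kn}{\rt pkn}=\frac{(\B{p+1}kn)^{p+2}}{(\B pkn)^{p+1}}\,\sqrt{\frac{4p+10}{4p+6}} .
\]
By point~\ref{p-decreasing} of Proposition~\ref{prop:B} (this is where $k\ge 0$ enters) we have $0<\B{p+1}kn\le\B pkn$, hence $(\B{p+1}kn)^{p+2}\le(\B pkn)^{p+2}$ and therefore
\[
\frac{\rt{p+1}kn}{\rt pkn}\ \le\ \B pkn\,\sqrt{\frac{2p+5}{2p+3}} .
\]
So it suffices to show $\B pkn<\bigl((2p+3)/(2p+5)\bigr)^{1/2}$.

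This last inequality is the only place where the hypothesis $\rt pkn\le 1$ is used, and it is the step I expect to be the main obstacle. From $\rt pkn=(\B pkn)^{p+1}\sqrt{4p+6}\le 1$ I would deduce $\B pkn\le(4p+6)^{-1/(2(p+1))}$, so the claim reduces to the elementary bound $(4p+6)^{-1/(2(p+1))}<\bigl((2p+3)/(2p+5)\bigr)^{1/2}$, which after taking logarithms is equivalent to $\frac{\ln(4p+6)}{p+1}>\ln\frac{2p+5}{2p+3}$. I would prove the latter by the chain
\[
\frac{\ln(4p+6)}{p+1}\ \ge\ \frac{\ln 6}{p+1}\ >\ \frac{1}{p+1}\ =\ \frac{2}{2p+2}\ >\ \frac{2}{2p+3}\ >\ \ln\!\Bigl(1+\frac{2}{2p+3}\Bigr)\ =\ \ln\frac{2p+5}{2p+3},
\]
using $\ln 6>1$ and $\ln(1+x)<x$ for $x>0$. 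Combining the two displayed bounds yields $\rt{p+1}kn/\rt pkn<1$, which completes the one-step claim and hence the corollary.

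I would also point out why the hypothesis $\rt pkn\le 1$ cannot be dispensed with: $\B pkn$ need not itself be strictly decreasing in $p$ — for instance it is constant in $p$ when $k=0$ and $n=2$ — so monotonicity of $\B pkn$ alone is not enough to beat the growing factor $\sqrt{4p+6}$, and one genuinely has to exploit that $\rt pkn$ has already dropped to (at most) $1$.
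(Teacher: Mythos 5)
Your proof is correct. It differs from the paper's in execution, though it uses the same two ingredients (point~\ref{p-decreasing} of Proposition~\ref{prop:B} and the hypothesis $\rt{\bar p}kn\le 1$). The paper's argument is shorter: it observes that $(\rt pkn)^{1/(p+1)}=\B pkn\,(4p+6)^{1/(2p+2)}$ is strictly decreasing in $p$, being the product of a positive decreasing factor and the positive strictly decreasing factor $(4p+6)^{1/(2p+2)}$; combined with $(\rt{\bar p}kn)^{1/(\bar p+1)}\le 1$ this immediately forces $\rt pkn=\bigl((\rt pkn)^{1/(p+1)}\bigr)^{p+1}$ to decrease strictly for $p\ge\bar p$. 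Your one-step ratio argument replaces the root trick with an explicit comparison of consecutive terms, and the elementary chain $\frac{\ln(4p+6)}{p+1}>\frac{1}{p+1}>\frac{2}{2p+3}>\ln\frac{2p+5}{2p+3}$ is a hands-on substitute for the monotonicity of $(4p+6)^{1/(2p+2)}$; it is somewhat longer but entirely self-contained and makes transparent exactly where the hypothesis $\rt pkn\le1$ is consumed. Your closing observation that $\B p02$ is constant in $p$, so that the hypothesis cannot be dropped, is a worthwhile sanity check that the paper leaves implicit.
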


\begin{proof}
From point \ref{p-decreasing} of Proposition \ref{prop:B}, $\B pkn$ is decreasing in $p$. Moreover, $(4p+6)^{1/(2p+2)}$ is strictly decreasing in $p$. Thus $(\rt pkn)^{1/(p+1)}$ is also strictly decreasing in $p$ and the result follows.
\end{proof}

\begin{remark}\label{rem:quarter}
For fixed $k\geq 0$ and given $\bar p$ and $\bar n$ such that $\rt{\bar p}k{\bar n}<1$ then from the above corollaries we find that 
  $$\const   p{  p-1} m < \const   p{k}{  n},\qquad \forall p\ge \bar p,\quad \forall n \ge \bar n.$$
  This means that there cannot be isolated values for which this inequality holds.
A similar result is true for $k=-1$, but it requires a more technical argument that is postponed until later.
\end{remark}

\subsection{IGA-FEM comparison}

\begin{theorem}[IGA-FEM comparison]\label{thm:fem}
For $p\geq 3$ and sufficiently large $n$, more precisely
$$
\begin{aligned}
&n\geq 7   &&\text{for} &&p=3,
\\&n\geq 4 &&\text{for} &&p\in\{4,5\},
\\&n\geq 3 &&\text{for} &&p\in\{6,...,37\},
\\&n\geq 2 &&\text{for} &&p\ge 38,
\end{aligned}
$$
we have 
$$
\const p{p-1}m<\const p0n.
$$
\end{theorem}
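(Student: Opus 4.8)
The statement is a quantitative version of Theorem~\ref{thm:comparison-general} for the specific case $k=0$, so the natural strategy is to make the abstract argument of that theorem fully explicit using the ratio bound $\rt p0n$ from \eqref{eq:ratio-m}--\eqref{eq:base-m}. Since $\const p{p-1}m/\const p0n \le \rt p0n$, it suffices to verify $\rt p0n < 1$ for each $(p,n)$ in the list, and by Corollary~\ref{thm:decreasing-n} the ratio is strictly decreasing in $n$, so for each $p$ it is enough to check the single threshold value of $n$ stated (e.g. $n=7$ for $p=3$, $n=4$ for $p=4,5$, etc.). Likewise, by Remark~\ref{rem:quarter} (which combines Corollaries~\ref{thm:decreasing-n} and~\ref{thm:decreasing-p}), once $\rt{\bar p}0{\bar n}<1$ holds at a corner it holds for all $p\ge\bar p$, $n\ge\bar n$, so the whole infinite range $p\ge 38$, $n\ge 2$ reduces to checking $\rt{38}02<1$, and the band $p\in\{6,\dots,37\}$, $n\ge 3$ reduces to checking the finitely many values $\rt p03$ for $6\le p\le 37$ (or, using monotonicity in $p$ again, essentially just $p=6$ and $p=37$ to bracket, though one should confirm monotonicity applies throughout).

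First I would record the explicit formula
$$
\rt p0n=\Big(\frac{4}{e\pi}\cdot\frac{n(p+1)}{p(n-1)+1}\Big)^{p+1}\sqrt{4p+6},
$$
and then handle the four regimes in turn. For $p\ge 38$, $n\ge 2$: by Corollary~\ref{thm:decreasing-n} it suffices to treat $n=2$, where $\B p02=\frac{4}{e\pi}\cdot\frac{2(p+1)}{p+1}=\frac{8}{e\pi}$; but $\frac{8}{e\pi}\approx 0.9366<1$, so $\rt p02=(8/(e\pi))^{p+1}\sqrt{4p+6}$, and since $(8/(e\pi))^{p+1}$ decays geometrically while $\sqrt{4p+6}$ grows only polynomially, this is $<1$ for all $p$ past some threshold; a direct check shows $p=38$ already suffices (here $(0.9366)^{39}\sqrt{158}\approx 0.9366^{39}\cdot 12.57$, and $0.9366^{39}\approx e^{39\ln 0.9366}=e^{-2.56}\approx 0.077$, giving $\approx 0.97<1$ — this is the tightest corner and deserves a careful numerical bound). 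For the band $6\le p\le 37$ at $n=3$: $\B p03=\frac{4}{e\pi}\cdot\frac{3(p+1)}{2p+1}$, which by point~\ref{p-decreasing} of Proposition~\ref{prop:B} is decreasing in $p$, so the worst case in this band for the base is $p=6$; but $\rt p03$ itself one should check is eventually decreasing in $p$ (Corollary~\ref{thm:decreasing-p} applies once $\rt{\bar p}03\le 1$), so after verifying $\rt 603<1$ directly and confirming monotonicity takes over, the band is covered. For $p\in\{4,5\}$ at $n=4$ and $p=3$ at $n=7$, these are just two or three individual inequalities to verify numerically.

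The main obstacle is not conceptual but numerical precision at the boundary cases: the thresholds in the list are presumably chosen to be exactly sharp for the bound $\rt p0n$, meaning $\rt p0n<1$ at the stated $n$ but $\rt p0{(n-1)}\ge 1$ just below it. So the proof must carry enough decimal accuracy — especially for $p=3,n=7$ and for the corner $p=38,n=2$ — that the strict inequality $\rt p0n<1$ is genuinely established rather than merely plausible. A secondary subtlety is making sure that the reduction to finitely many checks is airtight: Corollary~\ref{thm:decreasing-p} only gives monotonicity in $p$ \emph{once the ratio is already below $1$}, so for the band $6\le p\le 37$ one cannot simply check the endpoints without first confirming that $\rt p03<1$ holds throughout, which in practice means either checking all $32$ values or arguing that $(\rt p03)^{1/(p+1)}$ is monotone (as in the proof of Corollary~\ref{thm:decreasing-p}) and then that $\rt 603<1$, after which monotonicity of $(\rt p03)^{1/(p+1)}$ below the threshold $1$ forces $\rt p03<1$ for all larger $p$ in the band — so two explicit evaluations plus the monotonicity lemma suffice. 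I would present the argument in exactly this order: state the formula for $\rt p0n$, invoke Corollary~\ref{thm:decreasing-n} to fix $n$ at the listed threshold for each $p$, invoke the $p$-monotonicity (via the proof of Corollary~\ref{thm:decreasing-p}) to collapse the infinite $p$-ranges, and close with the handful of explicit numerical verifications.
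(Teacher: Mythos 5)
Your proposal is correct and follows essentially the same route as the paper: the paper's proof likewise invokes Corollaries~\ref{thm:decreasing-n} and~\ref{thm:decreasing-p} via Remark~\ref{rem:quarter} to reduce everything to the four corner checks $\rt 307$, $\rt 404$, $\rt 603$, $\rt{38}02$, which it then evaluates numerically (e.g.\ $\rt{38}02=0.9851\ldots$). Your handling of the subtlety that $p$-monotonicity only kicks in once the ratio is below $1$ matches the paper's logic exactly.
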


Note that for $n=1$ or $p=0$ the spaces are the same and $\const p{p-1}m=\const p0n$.
Note further that no conclusion can be drawn for $p=2$. Indeed we have
$$
\rt 20n=\Big(\frac {4}{e\pi}\frac{3n}{2n-1}\Big)^3 \sqrt{14} >\Big(\frac {6}{e\pi}\Big)^3 \sqrt{14}> 1, \quad \forall n\ge 2.
$$
All cases are summarized in Fig.~\ref{fig:iga-fem}.

\begin{proof}
Using Corollary \ref{thm:decreasing-n} and Corollary \ref{thm:decreasing-p} as explained in Remark \ref{rem:quarter} it is enough to show that $\rt {38}0{2}$,  $\rt 603$, $\rt 404$ and  $\rt 307$  are less than $1$.
We have
\begin{align*}
\rt {38}02 &= \Big(\frac8{e\pi }\Big)^{39}\sqrt{158} = 0.9851\ldots\\
\rt 603&= \Big(\frac4{e\pi }\frac {21}{13} \Big)^{7}\sqrt{30} = 0.7776\ldots\\
\rt 404&= \Big(\frac4{e\pi }\frac {20}{13} \Big)^{5}\sqrt{22} = 0.9114\ldots\\
\rt 307&= \Big(\frac4{e\pi }\frac {28}{19} \Big)^{4}\sqrt{18} = 0.9632\ldots
\end{align*}
\end{proof}

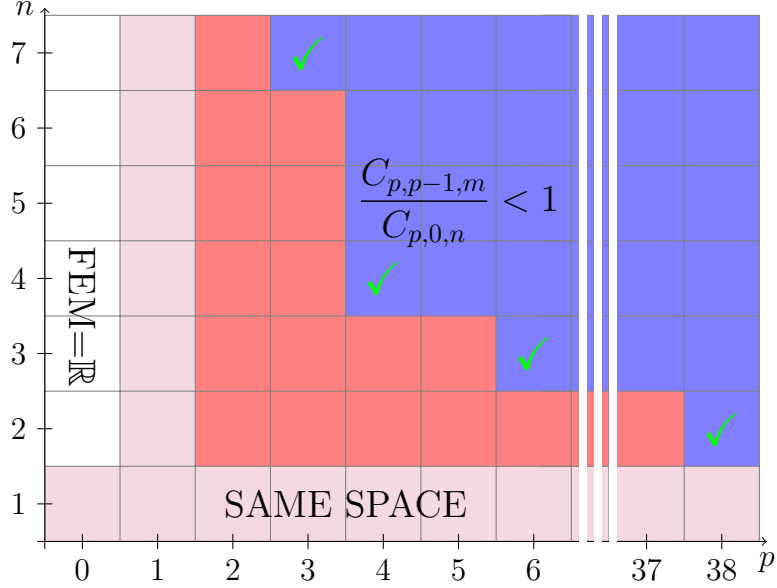
\begin{figure}
\begin{center}
\begin{tikzpicture} 

\fill[purple!15]  (0,0)--(0,1)--(1,1)--(1,7)--(2,7)--(2,1) --(9.5,1)--(9.5,0);
\fill[white] (0,1) rectangle (1,6.8);
\fill[blue!50] (3,6)--(4,6)--(4,3)--(6,3)--(6,2)--(8.5,2)--(8.5,1)--(9.5,1)--(9.5,7)--(3,7);
\fill[red!50] (2,1) -- (8.5,1)--(8.5,2)--(6,2)--(6,3)--(4,3)--(4,6)--(3,6)--(3,7)--(2,7);

\draw[gray] (0,0) grid (7,7);
\begin{scope}[xshift=7.5cm]
\draw[gray] (-.9,0) grid (2,7);
\end{scope}

\draw[->] (-.1,0)--(9.6,0) node[below] {$p$};
\draw[->] (0,-.1)--(0,7.1) node[left]      {$n$};

\node at (4,.5) {\large SAME SPACE};
\node[rotate=-90] at (.5,3) {\large FEM=$\R$};

\foreach \x[evaluate=\x as \xp using \x+.1] in {7.1,7.3,7.5} { \fill[white] (\x,-.1) rectangle  (\xp,7.1); }

\foreach \p[evaluate=\p as \l using \p+.5] in {0,1,...,6} {\draw (\l,.1)--(\l,-.1) node[below] {$\p$};}
\foreach \n[evaluate=\n as \l using \n-.5] in {1,2,...,7} {\draw (.1,\l)--(-.1,\l) node[left]   {$\n$};}
\foreach \p/\l in {37/8, 38/9} {\draw (\l,.1)--(\l,-.1) node[below] {$\p$};}

\node at (5.5,4.5) {\large $\displaystyle\frac {\const p{p-1}m}{\const p0n}<1$};

\node at (3.5,6.5) {\Large \Checkmark};
\node at (4.5,3.5) {\Large \Checkmark};
\node at (6.5,2.5) {\Large \Checkmark};
\node at (9,1.5) {\Large \Checkmark};

\end{tikzpicture}
\end{center}
\caption{The blue area indicates for which $p$ and $n$ we can conclude that IGA approximation is better than FEM approximation. The red area indicates where no conclusion can be obtained from the estimate. The two spaces coincide in the pink area.}\label{fig:iga-fem}
\end{figure}

\subsection{IGA-DG comparison}\label{sec:dg}

Similarly to the previous subsection we note that for $n=1$ or $p=0$ the spaces are the same and $\const p{p-1}m=\const p{-1}n$.

\begin{lemma}\label{thm:decreasing-p-1}
For $n=2$, $p\ge 22$ and $n=3$, $p\ge 2$ the function $\rt  p{-1}n$ is strictly decreasing in $p$.
\end{lemma}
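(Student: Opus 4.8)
The plan is to show that $\rt p{-1}n$ is strictly decreasing in $p$ for the two specified ranges by analyzing $(\rt p{-1}n)^{1/(p+1)}$, just as in the proof of Corollary \ref{thm:decreasing-p}. Recall that
$$
\rt p{-1}n = (\B p{-1}n)^{p+1}\sqrt{4p+6},\qquad \B p{-1}n = \frac{4}{e\pi}\cdot\frac{n(p+1)}{(p+1)(n-1)+1}.
$$
Since $k=-1$ is excluded from point \ref{p-decreasing} of Proposition \ref{prop:B}, I cannot simply cite monotonicity of the base; in fact for $n\ge 2$ one checks that $\B p{-1}n$ is \emph{increasing} in $p$ (its limit as $p\to\infty$ is $\frac{4}{e\pi}\frac{n}{n-1}$, approached from below). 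So the decrease of $\rt p{-1}n$ must come from a competition: the base $\B p{-1}n$ grows toward a limit strictly less than $1$, while the factor $\sqrt{4p+6}$ grows like a power of $p$, but raised to the $1/(p+1)$ it decays. The point is that once $\B p{-1}n$ is bounded below $1$ by a fixed margin, the geometric decay $t^{p+1}$ of $(\B p{-1}n)^{p+1}$ eventually beats everything, and we just need to locate the threshold.

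Concretely, I would proceed as follows. \textbf{Step 1.} Write $\rt p{-1}n = \exp\big((p+1)\log \B p{-1}n + \tfrac12\log(4p+6)\big)$ and differentiate the exponent with respect to (a continuous) $p$, or equivalently compare consecutive terms and show $\rt{p+1}{-1}n < \rt p{-1}n$ directly, i.e.
$$
(\B{p+1}{-1}n)^{p+2}\sqrt{4p+10} < (\B p{-1}n)^{p+1}\sqrt{4p+6}.
$$
\textbf{Step 2.} For $n=3$: here $\B p{-1}3 = \frac{4}{e\pi}\cdot\frac{3(p+1)}{2p+3}$, which at $p=2$ already equals $\frac{4}{e\pi}\cdot\frac{9}{7}\approx 0.602<1$ and increases monotonically to the limit $\frac{4}{e\pi}\cdot\frac{3}{2}\approx 0.702<1$. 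So for all $p\ge 2$ we have $\B p{-1}3 \le t:=\frac{6}{e\pi}<1$, hence $\rt p{-1}3 \le t^{p+1}\sqrt{4p+6}$, and it suffices to check that $t^{p+1}\sqrt{4p+6}$ is decreasing for $p\ge 2$ — a one-variable calculus exercise reducing to $(p+2)\log t + \frac12\log(4p+10) < (p+1)\log t + \frac12\log(4p+6)$, i.e. $\log t < \frac12\log\frac{4p+6}{4p+10}$... wait, that inequality goes the wrong way since $\log t<0$ and the right side is also negative; one must instead verify $|\log t| > \frac12\log\frac{4p+10}{4p+6}$, which holds comfortably for all $p\ge 2$ since the right side is $O(1/p)$. \textbf{Step 3.} For $n=2$: now $\B p{-1}2 = \frac{4}{e\pi}\cdot\frac{2(p+1)}{p+2} = \frac{8}{e\pi}\cdot\frac{p+1}{p+2}$, which increases toward the limit $\frac{8}{e\pi}\approx 0.936<1$. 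The margin below $1$ is now tiny, so the threshold is much larger. I would use that for $p\ge 22$, $\B p{-1}2 \le \frac{8}{e\pi}\cdot\frac{23}{24} =: t'<1$ (by monotonicity in $p$ it is at most the value one step below the limit appropriate to the range — actually at $p=22$ it equals $\frac{8}{e\pi}\cdot\frac{23}{24}\approx 0.897$, but it keeps growing, so more carefully one bounds it by the limit $\frac{8}{e\pi}$), and then show $(\B p{-1}2)^{p+1}\sqrt{4p+6}$ is decreasing by the refined consecutive-ratio inequality, keeping track of the actual $p$-dependence of $\B p{-1}2$ rather than just the crude bound, since near the limit the crude bound $t=\frac{8}{e\pi}$ may not leave enough room.

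The main obstacle is Step 3: for $n=2$ the base is so close to $1$ that a naive bound $\B p{-1}2\le \frac{8}{e\pi}<1$ combined with $\sqrt{4p+6}$ does give eventual decrease, but pinning the threshold at exactly $p=22$ requires either a careful numerical check at the boundary or a slightly sharper argument exploiting that $\B p{-1}2 = \frac{8}{e\pi}(1-\frac{1}{p+2})$ is still a bit below its limit. I expect the cleanest route is to show the logarithmic derivative of $\rt p{-1}2$ in $p$ is negative: $\frac{d}{dp}\big[(p+1)\log\B p{-1}2\big] + \frac{d}{dp}\big[\tfrac12\log(4p+6)\big] = \log\B p{-1}2 + (p+1)\cdot\frac{1}{(p+1)(p+2)} + \frac{2}{2p+3} = \log\B p{-1}2 + \frac{1}{p+2} + \frac{2}{2p+3}$, and verify this is $<0$ for $p\ge 22$: since $\log\B p{-1}2 \le \log\frac{8}{e\pi} \approx -0.0659$ while $\frac{1}{p+2}+\frac{2}{2p+3} \le \frac{1}{24}+\frac{2}{47} \approx 0.0842$ at $p=22$ — hmm, that is not yet negative, so one actually needs the true value $\log\B{22}{-1}2 = \log\frac{8}{e\pi} + \log\frac{23}{24}\approx -0.0659 - 0.0426 = -0.1085$, against $0.0842$, giving $<0$; and as $p$ grows the positive terms decay like $1/p$ while $\log\B p{-1}2$ stays bounded above by a negative constant, so the inequality persists. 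This confirms $p=22$ as a valid (essentially sharp) threshold and completes the lemma.
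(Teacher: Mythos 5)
Your Step 3 is essentially the paper's method: the paper sets $s=p+1$, shows that the derivative of $(\rt{s-1}{-1}n)^2$ with respect to $s$ is negative by bounding the logarithm by $L=\ln\bigl(\frac4\pi\frac{n}{n-1}\bigr)$, and reduces the sign condition to a quadratic inequality in $s$ whose root gives the thresholds ($\approx 21.14$ for $n=2$, $\approx 1.33$ for $n=3$); your logarithmic derivative of $\rt p{-1}2$ is the same computation. One slip there: $\frac{d}{dp}\bigl[\frac12\ln(4p+6)\bigr]=\frac{1}{2p+3}$, not $\frac{2}{2p+3}$. The slip is conservative at $p=22$, but it undermines your closing "persistence" argument for $23\le p\le 30$, since there $\frac{1}{p+2}+\frac{2}{2p+3}$ still exceeds $-\ln\frac{8}{e\pi}\approx 0.0653$; with the correct term one has $\frac{1}{p+2}+\frac{1}{2p+3}\le 0.0630<0.0653$ for all $p\ge 22$, and the whole range closes cleanly against the uniform bound $\ln \B p{-1}2\le\ln\frac{8}{e\pi}$.

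The genuine gap is Step 2, the case $n=3$. You bound $\rt p{-1}3\le t^{p+1}\sqrt{4p+6}$ with $t=\frac{6}{e\pi}$ and then propose to show that this \emph{upper bound} is decreasing. A function dominated by a decreasing function need not itself be decreasing, and the worry is not cosmetic here because $\B p{-1}3$ is strictly increasing in $p$; a consecutive-term comparison using only $\B{p+1}{-1}3\le t$ together with the lower bound $\B p{-1}3\ge \B 2{-1}3$ also fails, since $(t/\B 2{-1}3)^{p+1}\to\infty$. The fix is simply to run your own (corrected) logarithmic-derivative computation for $n=3$ as well: one finds
\begin{equation*}
\frac{d}{dp}\ln \rt p{-1}3=\ln \B p{-1}3+\frac{2}{2p+3}\le \ln\frac{6}{e\pi}+\frac{2}{7}\approx -0.353+0.286<0
\end{equation*}
for all $p\ge 2$, which is exactly the paper's argument specialized to $n=3$ and gives the claim with a comfortable margin.
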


\begin{proof}
First note that $\rt  p{-1}n$ is decreasing whenever $\rt  p{-1}n^2$ is decreasing.
We now let $s=p+1$ and compute the derivative of $\rt  {s-1}{-1}n^2$ with respect to $s$ and show that it is negative.
$$
\begin{aligned}
\partial_s&( \rt  {s-1}{-1}n)^2= \underbrace{\frac 4{ns-s+1}\Big(\frac 4{e\pi} \frac{ns}{ns-s+1}\Big )^{2s} }_{> 0}\\
&\Big(  1-2 s^2 (n-1) +  \underbrace{(1+2s)(ns-s+1)}_{>0} \underbrace{\ln \big(\frac 4{\pi} \frac {ns}{ns-s+1} \big)}_{\leq L} \Big),
\end{aligned}
$$
where $L=\ln \big(\frac 4{\pi} \frac {n}{n-1} \big)<1$ is an upper bound on the logarithm.
It follows that $\partial_s \rt  {s-1}{-1}n <0$ if 
$$
2(n-1)(L-1)s^2 + (n+1)L s + (L+1)<0,
$$
i.e., for 
$$
s> \frac { -(n+1)L -\sqrt{(n+1)^2L^2-8(L^2-1)(n-1)} }{4(n-1)(L-1)}.
$$
For $n=2$ we have $L=\ln\frac 8\pi < 0.935$ and $\rt p{-1}2$ is strictly decreasing for
$$
p=s-1\ge \frac {3L+\sqrt{L^2+8}}{4(1-L)}-1\approx 21.14\ldots.
$$
For $n=3$ we have $L=\ln\frac 6\pi < 0.648$ and $\rt p{-1}3$ is strictly decreasing for
$$
p=s-1\ge\frac12\frac{1+L}{1-L}-1\approx 1.33\ldots.
$$ 
\end{proof}

\begin{theorem}[IGA-DG comparison]\label{thm:dg}
	For$$
	\begin{aligned}
		&n\ge 3 &&\text{and}&&p\in\{1,\dots,17\},
		\\&n\ge 2 &&\text{and}&&p\ge 18,
	\end{aligned}$$
we have 
$$
\const p{p-1}m<\const p{-1}n.
$$
\end{theorem}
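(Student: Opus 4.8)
The plan is to reduce the theorem to verifying $\rt p{-1}n<1$ for finitely many explicit pairs $(p,n)$, using the monotonicity results of Section~\ref{sec:comparison-1d} to propagate each verified case to all larger $n$ and all larger $p$. Recall that $\const p{p-1}m<\const p{-1}n$ holds whenever $\rt p{-1}n<1$, and that by Corollary~\ref{thm:decreasing-n} the ratio $\rt p{-1}n$ is strictly decreasing in $n$ for every fixed $p$. Hence, for each fixed $p$ it suffices to find the threshold $n_0(p)$ at which $\rt p{-1}{n_0(p)}<1$ first holds, and then the inequality is automatic for all $n\ge n_0(p)$. The content of the theorem is then exactly the claim that $n_0(p)\le 3$ for $1\le p\le 17$ and $n_0(p)\le 2$ for $p\ge 18$.

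The first regime, $n=3$, I would handle by combining a single base computation with monotonicity in $p$. By Lemma~\ref{thm:decreasing-p-1}, $\rt p{-1}3$ is strictly decreasing in $p$ for $p\ge 2$; so once one checks $\rt 2{-1}3<1$ numerically (a direct evaluation of $(\tfrac4{e\pi}\cdot\tfrac{3\cdot 3}{3\cdot 2-1})^{3}\sqrt{14}=(\tfrac4{e\pi}\cdot\tfrac95)^3\sqrt{14}$, which is well below $1$), it follows that $\rt p{-1}3<1$ for all $p\ge 2$, and hence $\const p{p-1}m<\const p{-1}n$ for all $n\ge 3$ and all $p\ge 2$. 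The remaining case in this regime is $p=1$, $n\ge 3$: here $\rt 1{-1}n=(\tfrac4{e\pi}\cdot\tfrac{2n}{2n-1})^{2}\sqrt{10}$, which is decreasing in $n$ by Corollary~\ref{thm:decreasing-n}, so it is enough to check $\rt 1{-1}3=(\tfrac4{e\pi}\cdot\tfrac65)^2\sqrt{10}<1$ directly.

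The second regime, $n=2$ with $p\ge 18$, is the delicate one and is where I expect the real work to lie. By Lemma~\ref{thm:decreasing-p-1}, $\rt p{-1}2$ is strictly decreasing in $p$ only once $p\ge 22$, so monotonicity alone does not cover $p\in\{18,19,20,21\}$. The clean way to finish is: first verify $\rt{22}{-1}2<1$ by an explicit evaluation of $(\tfrac8{e\pi})^{23}\sqrt{94}$, which together with the decreasing behaviour for $p\ge 22$ gives $\rt p{-1}2<1$ for all $p\ge 22$; then treat the four exceptional degrees $p=18,19,20,21$ by evaluating $\rt p{-1}2=(\tfrac8{e\pi})^{p+1}\sqrt{4p+6}$ individually and checking each is $<1$. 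The main obstacle is genuinely the tightness of the bound near $p=18$: since $\rt{38}02=(\tfrac8{e\pi})^{39}\sqrt{158}=0.9851\ldots$ is already shown in Theorem~\ref{thm:fem} to be only barely below $1$, and $\rt p{-1}2$ has the same base $\tfrac8{e\pi}$ but a smaller radical $\sqrt{4p+6}$ and starts at a smaller exponent $p+1=19$, one must be careful that the values for $p=18,\dots,21$ really do fall below $1$ — this is a finite but nontrivial numerical check, and it is the reason the stated threshold is $p\ge 18$ rather than something smaller.

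Finally, assembling the two regimes: for $n\ge 3$ the first regime covers all $p\ge 1$; for $n=2$ the second regime covers all $p\ge 18$; and by Corollary~\ref{thm:decreasing-n} any case proved for a given $n$ extends to all larger $n$, so nothing further is needed. Note that $n=1$ or $p=0$ is excluded since then $\spline p{p-1}m=\spline p{-1}n$ and the inequality degenerates to equality, consistent with the remark opening Section~\ref{sec:dg}.
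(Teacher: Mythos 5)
Your overall architecture is exactly the paper's: reduce the claim to $\rt p{-1}n<1$, use Corollary~\ref{thm:decreasing-n} to propagate each verified case to all larger $n$, use Lemma~\ref{thm:decreasing-p-1} (monotonicity in $p$ for $n=3$, $p\ge 2$ and for $n=2$, $p\ge 22$) to propagate in $p$, and check the base cases $\rt 1{-1}3$, $\rt 2{-1}3$, $\rt {22}{-1}2$ together with the four stragglers $\rt p{-1}2$, $p=18,\dots,21$, by direct evaluation. This is precisely the proof in the paper.

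However, the numerical verification --- which is the entire content of the base step --- is carried out with the wrong formula, and with the numbers as you wrote them the proof does not close. From \eqref{eq:base-m}, $\B p{-1}n=\frac{4}{e\pi}\,\frac{n(p+1)}{(p+1)(n-1)+1}$. For $p=2$, $n=3$ the denominator is $3\cdot 2+1=7$, so $\rt 2{-1}3=\bigl(\frac4{e\pi}\frac 97\bigr)^3\sqrt{14}\approx 0.817<1$; you wrote $\bigl(\frac4{e\pi}\frac 95\bigr)^3\sqrt{14}\approx 2.24>1$ (the denominator $5$ is the $k=0$ value). Likewise for $n=2$ the denominator is $p+2$, so $\rt p{-1}2=\bigl(\frac4{e\pi}\frac{2(p+1)}{p+2}\bigr)^{p+1}\sqrt{4p+6}$; you replaced the base by $\frac 8{e\pi}$, which is $\B p02$, not $\B p{-1}2$, and then $\bigl(\frac8{e\pi}\bigr)^{19}\sqrt{78}\approx 2.55$ and $\bigl(\frac8{e\pi}\bigr)^{23}\sqrt{94}\approx 2.16$ are both greater than $1$, so every check in your second regime fails as stated. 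Your remark that ``$\rt p{-1}2$ has the same base $\frac 8{e\pi}$'' as $\rt p02$ is exactly where the argument goes wrong: the extra factor $\frac{p+1}{p+2}<1$ in the DG base is what brings the threshold down from $p\ge 38$ (the FEM case of Theorem~\ref{thm:fem}) to $p\ge 18$ here. With the correct base the checks do succeed ($\rt{18}{-1}2\approx 0.964$, $\rt{22}{-1}2\approx 0.812$, $\rt 2{-1}3\approx 0.817$, and $\rt 1{-1}3\approx 0.999$, which you did compute correctly), and your argument then coincides with the paper's.
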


\begin{proof}
Using Lemma \ref{thm:decreasing-n} and Lemma \ref{thm:decreasing-p-1} it is enough to show that $\rt 1{-1}3$, $\rt 2{-1}3$ and $\rt {22}{-1}2$ are less than $1$ to cover all cases but $\rt {18}{-1}2$, $\rt {19}{-1}2$, $\rt {20}{-1}2$, $\rt {21}{-1}2$. The latter are also checked.
We have
\begin{align*}
\rt 1{-1}3&=\Big(\frac4{e\pi }\frac 65 \Big)^{2}\sqrt{10} = 0.9990\ldots\\
\rt 2{-1}3&=\Big(\frac {4}{e\pi }\frac 97 \Big)^{3}\sqrt{14} = 0.8172\ldots\\
\rt {18}{-1}2&=\Big(\frac4{e\pi }\frac {19}{10} \Big)^{19}\sqrt{ 78} =  0.9639 \ldots\\
\rt {19}{-1}2&=\Big(\frac4{e\pi }\frac {40}{21} \Big)^{20}\sqrt{ 82} =  0.9247 \ldots\\
\rt {20}{-1}2&=\Big(\frac4{e\pi }\frac {21}{11} \Big)^{21}\sqrt{ 86} =  0.8862 \ldots\\
\rt {21}{-1}2&=\Big(\frac4{e\pi }\frac {44}{23} \Big)^{22}\sqrt{ 90} =  0.8484 \ldots\\
\rt {22}{-1}2&=\Big(\frac4{e\pi }\frac {23}{12} \Big)^{23}\sqrt{ 94} =  0.8115 \ldots
\end{align*}
\end{proof}
Note that nothing can be concluded for $n=2$ and $p\in\{1,\dots,17\}$ since the estimate $\rt p{-1}2>1$ in these cases. 
All cases are summarized in Fig.~\ref{fig:iga-dg}.

\begin{figure}
\begin{center}

\begin{tikzpicture} 
\fill[purple!15]  (0,0)--(0,7)--(1,7)--(1,1)--(12,1)--(12,0);
\fill[blue!50] (1,2)--(6,2)--(6,1)--(12,1)--(12,7)--(1,7);
\fill[red!50] (1,1) rectangle(6,2);

\draw[gray] (0,0) grid (12,7);

\draw[->] (-.1,0)--(12.1,0) node[below] {$p$};
\draw[->] (0,-.1)--(0,7.1) node[left]      {$n$};

\node at (2,.5) {\large SAME SPACE};
\node at (8.5,4) {\large $\displaystyle\frac {\const p{p-1}m}{\const p{-1}n}<1$};

\foreach \p[evaluate=\p as \l using \p+.5] in {0,1,...,3} {\draw (\l,.1)--(\l,-.1) node[below] {$\p$};}
\foreach \n[evaluate=\n as \l using \n-.5] in {1,2,...,6} {\draw (.1,\l)--(-.1,\l) node[left]   {$\n$};}

\foreach \x[evaluate=\x as \xp using \x+.1] in {4.3,4.5,4.7} { \fill[white] (\x,-.1) rectangle  (\xp,7.1); }
\foreach \p/\l in {17/5.5,18/6.5,19/7.5,20/8.5,21/9.5,22/10.5,23/11.5} {\draw (\l,.1)--(\l,-.1) node[below] {$\p$};}

\node at (1.5,2.5) {\Large \Checkmark};
\node at (2.5,2.5) {\Large \Checkmark};
\node at (6.5,1.5) {\Large \Checkmark};
\node at (7.5,1.5) {\Large \Checkmark};
\node at (8.5,1.5) {\Large \Checkmark};
\node at (9.5,1.5) {\Large \Checkmark};
\node at (10.5,1.5) {\Large \Checkmark};

\end{tikzpicture}
\end{center}
\caption{The blue area indicates for which $p$ and $n$ we can conclude that IGA approximation is better than DG approximation. The red area indicates where no conclusion can be obtained from the estimate.  The two spaces coincide in the pink area.}\label{fig:iga-dg}
\end{figure}
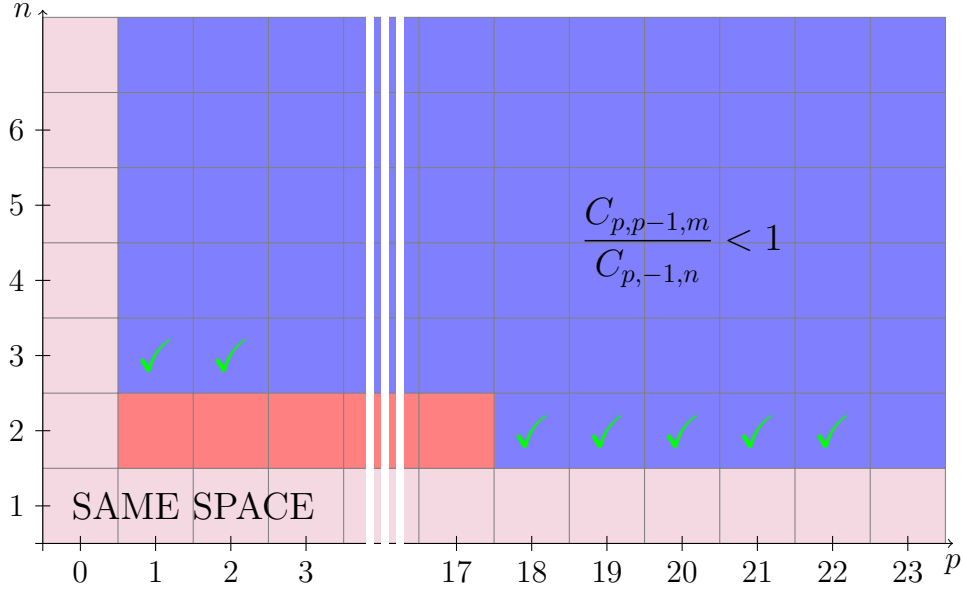

\section{Lower order Sobolev spaces}
\label{sec:low}

In this section we consider an approximand $f$ in $H^{q+1}$ and compare the approximation by smooth splines of degree $p>q$, $\spline p{p-1}m$, with that by $\smooth k$ splines of degree $q$, $\spline qkn$.
Both spaces provide the same approximation order, but the smoother space has a degree higher than the regularity of the approximand.
In IGA the degree of the spline space is sometimes  determined by the parametrisation of the domain, and not by the Sobolev regularity of the solution.
Our aim is to show that, for practical purposes, smooth spline spaces of degree higher than the Sobolev regularity have better approximation properties than lower smoothness spaces of the optimal degree.

Recalling that $\cf pknq$, $0\le q\le p$ is the best constant in 
$$
\norm {f-\proj pkn f}\le \cf pknq \norm{\partial^{q+1}f},
$$
we compare $\cf p{p-1} m q$ with $\const q kn $ under the constraint $\dim \spline p{p-1}m = \dim \spline qkn,$
which corresponds to
\begin{equation}\label{eq:m-formula-low}
m=(q-k)(n-1)+1+q-p.
\end{equation}

\begin{theorem}\label{thm:lower-norm}
For all $0\le q \le p$, $k\in\{-1,0,\ldots,p-1\}$ and $n\ge 1$ we have
$$
\cf pknq \le \Big(\frac1{n\pi}\Big)^{q+1}.
$$
\end{theorem}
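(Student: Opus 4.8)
The plan is to reduce to the maximally smooth case $k=p-1$ and then repeat, with derivative order $q+1$ in place of $p+1$, the polynomial-subtraction argument from the proof of Lemma~\ref{lem:univk}.

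For the reduction, note that $-1\le k\le p-1$ gives $\spline p{p-1}n\subseteq\spline pkn$, and the $L^2$ projection onto a larger subspace of $L^2$ never produces a larger error than the projection onto a smaller one; hence $\cf pknq\le\cf p{p-1}nq$, so it is enough to bound the latter. Fix $f\in H^{q+1}$ and set $E^q=\{v\in H^{q+1}:\partial^s v(0)=\partial^s v(1)=0 \text{ for } 0\le s<q \text{ with } s+q \text{ odd}\}$. The number of these conditions is at most $q+1=\dim\poly q$, so by Lemmas~\ref{lem:poly-interp} and \ref{lem:poly-interp2} (applied with $q$ in place of $p$) there is $g\in\poly q$ with $f-g\in E^q$. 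Since $\poly q\subseteq\poly p\subseteq\spline p{p-1}n$ we have $g\in\spline p{p-1}n$, hence $\proj p{p-1}n f-g=\proj p{p-1}n(f-g)$, and because $\Pi_E(f-g)\in E^q\cap\spline p{p-1}n$,
\begin{align*}
\norm{f-\proj p{p-1}n f}&=\norm{(f-g)-\proj p{p-1}n(f-g)}\le\norm{(f-g)-\Pi_E(f-g)}\\
&\le\Big(\frac{1}{n\pi}\Big)^{q+1}\norm{\partial^{q+1}(f-g)}=\Big(\frac{1}{n\pi}\Big)^{q+1}\norm{\partial^{q+1}f},
\end{align*}
where $\Pi_E:E^q\to E^q\cap\spline p{p-1}n$ is the $L^2$ projection, the middle inequality is \cite[Theorems 1 and 2]{Floater:2018} for derivative order $q+1\le p+1$, and the last equality uses $\deg g\le q$. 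This is the claimed bound; for $q=p$ it is exactly Lemma~\ref{lem:univk}.

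The step I expect to be delicate is this use of \cite[Theorems 1 and 2]{Floater:2018}: one must be sure that the optimal constant $(n\pi)^{-q-1}$ for $H^{q+1}$ approximation is still attained by the $E^q$-restriction of the degree-$p$ maximally smooth space even when the degree $p$ exceeds the Sobolev order $q+1$. This ``over-smooth'' case is exactly what Section~\ref{sec:low} is about, and it cannot be reduced to the matched degree-$q$ estimate by nesting, since for $q<p$ and $n\ge2$ the space $\spline q{q-1}n$ is not a subspace of $\spline p{p-1}n$; the case $n=1$ is trivial because then $\spline p{p-1}1=\poly p\supseteq\poly q$.
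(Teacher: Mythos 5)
Your reduction to $k=p-1$ via the nesting $\spline p{p-1}n\subseteq\spline pkn$ is fine and matches what the paper does implicitly. The problem is the step you yourself flag as delicate: it is not merely delicate, it is a genuine gap, and it is exactly the content of the theorem. The inequality you need,
$$
\norm{v-\Pi_E v}\le \Big(\frac{1}{n\pi}\Big)^{q+1}\norm{\partial^{q+1}v}\qquad\text{for } v\in E^q,\ \Pi_E: E^q\to E^q\cap\spline p{p-1}n,\ p>q,
$$
is not what \cite[Theorems 1 and 2]{Floater:2018} assert. Those theorems give the constant $(n\pi)^{-(r)}$ for $H^{r}$ approximation by spline spaces whose degree is matched to the Sobolev order (degree $r-1$), with appropriate boundary conditions; that is precisely how the paper uses them in Lemma~\ref{lem:univk}, where the degree is $p$ and the derivative order is $p+1$. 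For over-smooth degrees the known optimality results (cf.\ the discussion of \cite{Floater:2017} in the introduction) cover only degrees of the special form $\ell(q+1)-1$, and the optimal knots there are in general not uniform, so $\spline p{p-1}n$ with uniform knots does not contain an optimal subspace you can appeal to. Your argument as written is therefore only valid for $q=p$, where it reproduces Lemma~\ref{lem:univk}.

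The paper closes this gap by a different route: induction on $q$ for the maximally smooth space. The base case $q=0$ (Lemma~\ref{lem:ineqH1}) already illustrates why a direct citation fails: for odd $p$ the relevant optimal space of \cite{Floater:2018} lives on a shifted partition, and the paper has to extend the domain to $\widetilde I=(-\frac1{2n},1+\frac1{2n})$ and use a constant extension operator to transfer the bound back to $[0,1]$. The inductive step then defines a projection $Q:H^1\to\spline p{p-1}n$ by $Qf(x)=c(f)+\int_0^x\proj{p-1}{p-2}n\partial f(z)\,dz$, uses the $q=0$ estimate on $f-Qf$, and applies the induction hypothesis (for degree $p-1$, order $q-1$) to $\partial f$. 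If you want to repair your proof, you would need either this bootstrapping argument or an independent proof of the displayed inequality for the degree-$p$ space; the latter is not available from the cited literature.
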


The proof is done only for $k=p-1$ and using induction starting from $q=0$.
The base case is proved in the following lemma.

\begin{lemma}\label{lem:ineqH1}
For all $p\geq 0$ and $n\geq 1$ we have
\begin{equation}\label{ineq:H1}
\cf p{p-1}n0 \le \frac1{n\pi}.
\end{equation}
\end{lemma}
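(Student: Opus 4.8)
The plan is to bound $\cf p{p-1}n0$, the best constant for $L^2$-approximation of $H^1$ functions by $\smooth{p-1}$ splines of degree $p$, by $\frac1{n\pi}$. The natural strategy is to reduce to the case already known: Lemma~\ref{lem:univk} gives $\const p{p-1}n\le (n\pi)^{-p-1}$, and more fundamentally the Floater--Sande estimate \eqref{eq:Floater-result} gives $\norm{v-\Pi_E v}\le (n\pi)^{-p-1}\norm{\partial^{p+1}v}$ for $v\in E^p$. For the $H^1$ estimate I want instead the \emph{first-order} analogue: a bound $\norm{v-\Pi v}\le (n\pi)^{-1}\norm{\partial v}$ for suitable $v$, where $\Pi$ is projection onto $\spline p{p-1}n$. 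The point is that $\spline p{p-1}n\supseteq \spline 1 0 n$ (the degree-$1$, $\smooth 0$ splines on the same $n$ segments), so $\norm{f-\proj p{p-1}n f}\le \norm{f-\proj 10n f}$, and the constant $\cf 100{}\;$ for continuous piecewise linears is classically $\frac1{n\pi}$ — this is exactly the sharp Poincaré/Wirtinger-type constant, and indeed it is the $q=0$, $p=1$ case of the Floater--Sande results.

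First I would record the inclusion $\spline 1 0 n\subseteq\spline p{p-1}n$, valid for every $p\ge 1$, which is immediate from the definition \eqref{eq:def-spline} since a globally $\smooth 0$ piecewise linear function is in particular globally $\smooth{\min(1,p)-1}$... more carefully: a function in $\spline 10n$ is continuous and piecewise linear, hence trivially piecewise polynomial of degree $\le p$ and of smoothness $\ge p-1$ is \emph{not} automatic — a continuous piecewise linear has only $\smooth 0$ smoothness at breakpoints, so it does \emph{not} lie in $\spline p{p-1}n$ when $p\ge 2$. So that route fails. The correct reduction is instead: apply \eqref{eq:Floater-result} with the degree-$1$ instance. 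Set $E^1=\{v\in H^2: v'(0)=v'(1)=0\}$ (the $s=1,p=1$ boundary conditions, $s+p$ even is excluded, $s+p$ odd means $s$ even, so $s=0$: $v(0)=v(1)=0$; this is $H^2_0$ in their notation for $p=1$ odd). Hmm — the indexing needs care, but the upshot from \cite{Floater:2018} is a clean first-order estimate on the model space.

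The cleanest honest path: prove \eqref{ineq:H1} directly by the same device as Lemma~\ref{lem:univk}, but one derivative down. Given $f\in H^1$, I would subtract a constant (or a degree-$\le p-1$ polynomial matching appropriate boundary data of $f'$) so that $v:=f-g$ lies in the space $E$ for which the Floater--Sande first-order bound holds, use $g\in\spline p{p-1}n$ so $\norm{f-\proj p{p-1}n f}=\norm{v-\proj p{p-1}n v}\le\norm{v-\Pi_E v}\le (n\pi)^{-1}\norm{\partial v}=(n\pi)^{-1}\norm{\partial f}$. The subtlety — and I expect this to be the main obstacle — is identifying exactly which low-degree polynomial $g$ must be subtracted so that $f-g$ lands in the right non-standard Sobolev space, and confirming that \cite{Floater:2018} indeed supplies the constant $(n\pi)^{-1}$ at the first-order level (their Theorems~1 and~2 are stated for the top-order estimate; one needs the first-order specialization, or an intermediate-order version, which I would expect the authors to have and to cite). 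The polynomial-interpolation lemmas referenced in the appendix (Lemmas~\ref{lem:poly-interp} and~\ref{lem:poly-interp2}) should guarantee the needed $g$ exists. Everything else is the routine chain of inequalities exactly as in the proof of Lemma~\ref{lem:univk}.
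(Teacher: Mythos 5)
Your overall strategy --- reduce to the first-order ($q=0$) estimate of Floater--Sande for the projection onto a subspace of $\spline p{p-1}n$ --- is the right starting point, and you correctly reject the false inclusion $\spline 10n\subseteq\spline p{p-1}n$. But you have misidentified where the difficulty lies, and the step you dismiss as ``routine'' is exactly the one that fails. The first-order estimate in \cite[Theorem~2]{Floater:2018} holds for \emph{all} of $H^1$ (no boundary conditions are imposed at this order), so no polynomial $g$ needs to be subtracted and the appendix interpolation lemmas play no role here; your flagged ``main obstacle'' is a non-issue. The genuine obstacle is the partition: the optimal degree-$p$ spline space for $H^1$ in \cite{Floater:2018} has uniform knots $\{j/n\}$ only when $p$ is \emph{even}, in which case it is a subspace of $\spline p{p-1}n$ and the lemma is an immediate citation. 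When $p$ is \emph{odd} the optimal space lives on a shifted partition and is \emph{not} contained in $\spline p{p-1}n$, so neither the subspace monotonicity argument nor the subtraction device of Lemma~\ref{lem:univk} applies.

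The paper closes this gap with an extra idea your proposal lacks: for odd $p$ it enlarges the domain to $\widetilde I=(-\frac1{2n},1+\frac1{2n})$, so that the breakpoints $j/n$ become the interior knots of a uniform partition of $\widetilde I$ of the shifted type covered by \cite[Theorem~2]{Floater:2018}; it extends $f$ by constants to $\widetilde I$ (so that $\norm{\partial\mathcal{E}f}_{\widetilde I}=\norm{\partial f}$), applies the estimate there with space dimension $n+1$ and interval length $(n+1)/n$, and observes that these two factors cancel to give exactly $\frac1{n\pi}$. Restricting the resulting approximant to $[0,1]$ gives an element of $\spline p{p-1}n$, and the orthogonal projection $\proj p{p-1}n$ can only do better. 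Without this step (or some substitute handling the odd-$p$ partition mismatch) your argument proves the lemma only for even $p$.
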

\begin{proof}
If $p$ is even, then this follows directly from \cite[Theorem~2]{Floater:2018} (originally shown in \cite[Theorem~2]{Floater:2017}) where it is proved for a subspace of $\spline p{p-1}{n}$. 

If $p$ is odd, \cite[Theorem~2]{Floater:2018} states approximation results for splines on a different partition.
We obtain the desired result by extending the domain to 
$$
\widetilde I=(-\frac{1}{2n},1+\frac{1}{2n})
$$
and considering the spaces 
\begin{align*}
\widetilde E&=\{ f\in \smooth {p-1}(\widetilde I) : \partial^s f(-\frac 1{2n} )= \partial^s f(1+\frac 1{2n} )=0, \ 1\le s\le p,\,s\text{ odd} \}\\
\widetilde{\mathcal S}&=\{f\in \widetilde E  :  f|_{[-\frac 1{2n},0)},\,  f|_{[1,1+\frac 1{2n})},\,f|_{I_j}\in\poly p, \ j=0,\ldots,n-1 \}
\end{align*}
where we recall $I_j=[\frac jn,\frac{j+1}n)$. 
Note that $\spline pkn$ is the restriction of $\widetilde{\mathcal S}$ to $[0,1]$ and that $\dim  \widetilde {\mathcal S}=n+1$.
Furthermore let $\widetilde \Pi: L^2(\widetilde I) \to  \widetilde {\mathcal S} $ be the orthogonal projection  and $\mathcal{E}: H^1(I)\to H^1(\widetilde I)$ be the extension operator 
$$
\mathcal{E} f(x) =\begin{cases}
	f(0)& x\le 0,\\
	f(x)& 0<x\le 1,\\
	f(1)& x>1.
\end{cases}
$$
Then using \cite[Theorem~2]{Floater:2018} on $\widetilde I$  we get
\begin{align*}
\norm{f-\proj p{p-1}n f}&\le \norm{f- (\widetilde \Pi \circ \mathcal{E} f)|_{I}}
\le  \norm{\mathcal{E} f-  \widetilde \Pi \circ \mathcal{E} f}_{\widetilde I}\\
&\le  \frac{n+1}{n}\frac 1{(n+1)\pi}\norm{\partial \mathcal{E} f}_{\widetilde I} = \frac{1}{n\pi}\norm{\partial f},
\end{align*}
where the factor $(n+1)/n$ is the length of $\widetilde I$, $n+1$ is $\dim \widetilde{\mathcal S}$ and $\norm{\cdot}_{\widetilde I}$ denotes the $L^2$ norm on the interval $\widetilde I$.
\end{proof}

\begin{proof}[Proof of Theorem \ref{thm:lower-norm}]
The proof is by induction. The cases $p\ge q=0$ are proved in Lemma \ref{lem:ineqH1}.
The case $(p,q)$ is proved assuming  the result is true for $(p-1,q-1)$, namely that for $f\in H^q$ we have
$$\norm {f - \proj {p-1}{p-2}n f }\le \Big(\frac 1 {n\pi}\Big)^{q}\norm{\partial^q f}.$$
Let $Q:H^{1}\to \spline p{p-1}n$ be the projection defined by
\begin{equation}\label{eq:H1proj}
Q f (x) =c(f) + \int_0^x \proj {p-1}{p-2}n \partial f(z)\,dz
\end{equation}
 where $c(f)\in\R$ is uniquely determined by requiring that $Q$ is a projection.
Since $\proj p{p-1}n$ is a projection and using Lemma \ref{lem:ineqH1}, we have for $f\in H^{q+1}$
\begin{align*}
\norm{f-\proj p{p-1}n f }&= \norm{ (f-Q f) -\proj p{p-1}n ( f-Q f) }
\le \Big(\frac 1 {n\pi}\Big) \norm{\partial (f-Q f)}. 
\end{align*}
Using 
\eqref{eq:H1proj} and the induction hypothesis on  $\partial f\in H^q$ we obtain
\begin{align*}
\Big(\frac 1 {n\pi}\Big) \norm{\partial (f-Q f)} = \Big(\frac 1 {n\pi}\Big) \norm{\partial f - \proj {p-1}{p-2}n \partial f} \le \Big(\frac 1 {n\pi}\Big)^{q+1} \norm{\partial^{q+1}f}.
\end{align*}
\end{proof}

\begin{theorem}\label{thm:lower}
 Let $q$ and $k<q-1$ be given. If $\rt qk{\bar n}<1$ for some $\bar n$, then for all $p\ge q$,
\begin{equation}\label{ineq:lower}
n\ge \frac{p-k-1}{q-k-1}\bar n
\end{equation}
and with $m$ as in \eqref{eq:m-formula-low}, it holds
$$
\cf p{p-1}mq<\const qkn.
$$
\end{theorem}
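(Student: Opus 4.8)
The plan is to run the univariate argument of Section~\ref{sec:comparison-1d} with the Sobolev order $q$ in the role previously played by the degree $p$; the point is that Theorem~\ref{thm:lower-norm} supplies an upper bound of exactly the same shape as the one in Theorem~\ref{thm:bounds}, but with exponent $q+1$. Concretely, I would combine $\cf p{p-1}mq\le(m\pi)^{-q-1}$ (Theorem~\ref{thm:lower-norm} with smoothness $p-1$ and $n=m$) with the lower bound $\const qkn\ge\const q{-1}n\ge\frac{(q+1)!}{(2q+2)!\sqrt{2q+3}}\,n^{-q-1}$, which follows from Lemma~\ref{lem:legendre-poly} and the inclusion $\spline qkn\subseteq\spline q{-1}n$. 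Dividing and then invoking, verbatim with $p$ replaced by $q$, the Stirling estimate from the proof of Lemma~\ref{thm:ratio}, this gives
$$
\frac{\cf p{p-1}mq}{\const qkn}\le\Big(\frac n{m\pi}\Big)^{q+1}\frac{(2q+2)!}{(q+1)!}\sqrt{2q+3}\le\Big(\frac4{e\pi}\,\frac{n(q+1)}{m}\Big)^{q+1}\sqrt{4q+6}.
$$

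It therefore suffices to show that this last bound is $\le\rt qk{\bar n}=\big(\B qk{\bar n}\big)^{q+1}\sqrt{4q+6}<1$, i.e. that
$$
\frac nm\le\frac{\bar n}{(q-k)(\bar n-1)+1}.
$$
Substituting $m=(q-k)(n-1)+1+q-p$ from \eqref{eq:m-formula-low} and clearing denominators (both are positive, as verified below), a one-line computation reduces this inequality to $(\bar n-n)(q-k-1)\le\bar n(q-p)$, that is, to $n(q-k-1)\ge\bar n(p-k-1)$; since $q-k-1>0$ by the assumption $k<q-1$, this is precisely the hypothesis \eqref{ineq:lower}.

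Finally I would record the positivity statements used along the way: $(q-k)(\bar n-1)+1>0$ holds because $q>k$ and $\bar n\ge1$, and $m\ge1$ holds because \eqref{ineq:lower} together with $\bar n\ge1$ gives $n\ge\frac{p-k-1}{q-k-1}>\frac{p-k-1}{q-k}$, whence $(q-k)n>p-k-1$ and hence $m=(q-k)n-(q-k)+1-(p-q)>0$; being an integer, $m\ge1$ and $\spline p{p-1}m$ is well defined. The only genuinely new point in all of this is the reduction in the middle paragraph, namely the equivalence between the required ratio estimate and the stated lower bound on $n$; everything else is a transcription of the $p=q$ comparison, to which the statement specializes when $p=q$ (there \eqref{ineq:lower} reads $n\ge\bar n$ and the bound is just $\rt qkn$, decreasing in $n$ by Corollary~\ref{thm:decreasing-n}). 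So I expect no serious obstacle — the main care is in the bookkeeping of the cross-multiplication and in checking $m\ge1$.
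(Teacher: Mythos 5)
Your proposal is correct and follows essentially the same route as the paper: it bounds the ratio $\cf p{p-1}mq/\const qkn$ by $\rf pknq=(\Bf pknq)^{q+1}\sqrt{4q+6}$ with $\Bf pknq=\frac{4}{e\pi}\frac{n(q+1)}{m}$ (via Theorem~\ref{thm:lower-norm}, Lemma~\ref{lem:legendre-poly} and the Stirling estimate of Lemma~\ref{thm:ratio}), and then checks that $\Bf pknq\le \B qk{\bar n}$ is equivalent to \eqref{ineq:lower}. Your cross-multiplication and the verification that $m\ge 1$ are accurate and merely make explicit what the paper compresses into ``Similar to Section~\ref{sec:comparison-1d}''.
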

Observe that for fixed $n, k$ and $q$, the degree $p$ can only be increased until it reaches $p=(q-k)(n-1) +q $.
At that point $m=1$ and $\spline p{p-1}m=\poly p$.

\begin{proof}
	Similar to Section \ref{sec:comparison-1d} we have
	$$
	\frac {\cf p{p-1}mq}{\const qkn}\le \rf pknq
	$$
	where 
	\begin{align*}
	\rf pknq&= (\Bf pknq)^{q+1}\sqrt{4q+6}\quad\text{with}\\
	\label{eq:bf-pknq}\Bf pknq&=\frac{4}{e\pi} \frac{n (q+1)}{(q-k)(n-1)+1+q-p}.
	\end{align*}	
Moreover, $$\Bf pknq \le \B qk{\bar n}\quad \Rightarrow\quad\rf pknq\le \rt qk{\bar n}$$ 
and
 $\Bf pknq \le \B qk{\bar n}$ is equivalent to \eqref{ineq:lower}.
\end{proof}

Comparing $\Bf pknq$ in the above proof with $\B pkn$ for the case  $p=q$ in Section~\ref{sec:comparison-1d}, there is only an additional $q-p$ in the denominator.

\begin{example}
IGA-DG comparison in $H^2$. It follows from Theorem \ref{thm:dg} that for $k=-1$ and $q=1$ we can choose $\bar n=3$ in \eqref{ineq:lower}. Thus the IGA space of degree $p\geq 1$ gives better approximation in $H^2$ than the DG space of degree $1$ for all $n\geq 3p$. 
\end{example}

\begin{example}
IGA-FEM comparison in $H^4$. It follows from Theorem \ref{thm:fem} that for $k=0$ and $q=3$ we can choose $\bar n=7$ in \eqref{ineq:lower}. Thus the IGA space of degree $p\geq 3$ gives better approximation in $H^4$ than the FEM space of degree $3$ for all $n\geq 7(p-1)/2$. 
\end{example}

\section{Broken Sobolev spaces}
\label{sec:broken}

In numerical methods for PDEs, and especially in IGA \cite{Buffa:14}, it is common to consider broken Sobolev spaces, i.e., spaces of functions that are piecewise in $H^{p+1}$. 
The problem of approximating functions in broken Sobolev spaces arises in DG, PDEs with discontinuous coefficients and in isoparametric methods where the parametrization is only piecewise regular.
The aim of this section is to show that smooth spline spaces have better approximation properties, provided the discontinuities are representable in the spline space and that the partitions are sufficiently fine.

Let $\Xi=\{\xi_1<\dots<\xi_T\}\subset (0,1)$ be a set of breakpoints and $S=(s_1,\dots,s_{T})$, $s_i\in\{-1,\dots,p-1\}$,  be the corresponding smoothness parameters. For notational reasons let $\xi_0=0$ and $\xi_{T+1}=1$.
We consider the broken Sobolev space $\brkH^{p+1}(\Xi,S)$ defined by
\begin{equation}
\brkH^{p+1}(\Xi,S) = \left\{
\begin{aligned}f:\ &\partial^{p+1} f \in L^2(\xi_i,\xi_{i+1}),\, i=0,\dots,T
\\& \partial^{s_i+1}  f \in L^2(\xi_{i-1},\xi_{i+1}),\,i=1,\dots,T
\end{aligned} \right\}.
\end{equation}
See Figure \ref{fig:broken} for an example.
We will consider error estimates of the type $$
\norm{f-\Pi f} \le C \norm{\partial^{p+1}f}_\Xi
$$
where $\norm{\cdot}_\Xi$ is the piecewise $L^2$ norm:
$$
\norm{g}_\Xi = \Big (\sum_{i=0}^T \norm{ g}^2_{L^2(\xi_i,\xi_{i+1})} \Big)^{\frac 12}.
$$

To achieve the expected approximation order it is necessary that the spline spaces can represent the discontinuities of the derivatives of the functions in $\brkH^{p+1}(\Xi,S)$. 
Because of this we enrich the  spline space $\spline pkn$ by adding 
$$
\mathfrak J^p_k(\Xi,S)= \{f: f|_{(\xi_i,\xi_{i+1})}\in\poly p,\, f\in \smooth {\min \{k, s_i\}}(\xi_{i-1},\xi_{i+1}),\ i=1,\dots,T \}
$$
and obtaining
$$
\brkS pkn(\Xi,S)= \spline pkn + \mathfrak J^p_k(\Xi,S).
$$
Thus $\brkS pkn(\Xi,S)$ is a spline space having varying degree of smoothness at the breakpoints. An example is shown in Figure~\ref{fig:broken}.

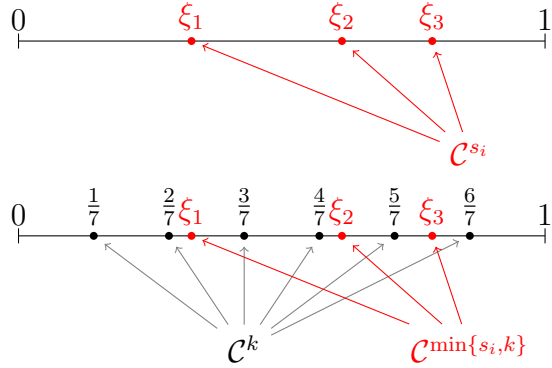
\begin{figure}
\begin{center}
	\begin{tikzpicture}
	\draw (0,0) node[above] {$0$} -- (7,0) node[above] {$1$};
	\draw (0,-.1)--(0,.1)(7,-.1)--(7,.1);
	\node[red] (CR) at (6,-1.5) {$\smooth{s_i}$};

	\foreach \x[count=\i] in {2.3, 4.3, 5.5}
		{
		\fill[red] (\x,0) circle (1.5pt) node (Q\i) {};
		\draw[red,->] (CR)--(Q\i);
		\node[above,red] at (Q\i) {$\xi_\i$};
		}
	\end{tikzpicture}
	
	\begin{tikzpicture}
	\draw (0,0) node[above] {$0$} -- (7,0) node[above] {$1$};
	\draw (0,-.1)--(0,.1)(7,-.1)--(7,.1);
	\node (CK) at (3,-1.5) {$\smooth k$};
	\node[red] (CR) at (6,-1.5) {$\smooth {\min\{s_i,k\}}$};

	\foreach \x[count=\i]  in {1,...,6}
		{
		\fill (\x,0) circle (1.5pt) node (K\i){};
		\draw[gray,->] (CK)--(K\i);
		\node[above] at (K\x) {$\frac \i7$};

		}
	\foreach \x[count=\i] in {2.3, 4.3, 5.5}
		{
		\fill[red] (\x,0) circle (1.5pt) node (Q\i) {};
		\draw[red,->] (CR)--(Q\i);
		\node[above,red] at (Q\i) {$\xi_\i$};
		}
	\end{tikzpicture}
\end{center}
\caption{Above, the breakpoints and corresponding regularities defining $\brkH^{p+1}(\Xi,S)$. Below, those defining $\brkS pk7(\Xi,S)$.}\label{fig:broken}
\end{figure}

Let $\brkC pkn(\Xi,S)$ be the smallest constant such that for all $f\in\brkH^{p+1}(\Xi,S)$ we have
$$
\norm{f- \mathfrak P_{p,k,n} f} \le \brkC pkn(\Xi,S) \norm{\partial^{p+1}f}_\Xi,
$$
where $\mathfrak P_{p,k,n}$ is the orthogonal projection onto $\brkS pkn(\Xi,S)$. 
As in the previous sections we compare $\brkC p{p-1}m(\Xi,S)$ with $\brkC pkn(\Xi,S)$.
In this case it is not always possible to choose $m$ such that $\dim \brkS p{p-1}m(\Xi,S)=\dim\brkS pkn(\Xi,S)$ because an increment of $1$ in $m$ does not necessarily correspond to an increment of $1$ in $\dim \brkS p{p-1}m(\Xi,S)$, e.g., when some of the  breakpoints of $\spline p{p-1}m$ align with the points in $\Xi$.
The dimension of $\brkS pkn(\Xi,S)$ is
$$
\dim \brkS pkn(\Xi,S)= (p-k)(n-1) +p+1 + \sum_{i=1}^T \sigma_i
$$
where
$$\sigma_i=\begin{cases}
p-\min\{k, s_i\}& \xi_i\not\in\{j/n,j=1,\dots,n-1\}\\
\max\{k-s_i, 0\}&\xi_i\in \{j/n,j=1,\dots,n-1\}.
\end{cases}
$$
In particular for $k=p-1$ we have $k\le s_i$ and $k-s_i=(p-s_i)-1$ giving
$$
\dim \brkS p{p-1}m(\Xi,S)= m +p + \sum_{i=1}^T (p-s_i) - \#(M\cap \Xi)
$$
where $M=\{i/m:\,i=1,\dots,m-1 \}$.
Our choice of $m$ is 
\begin{equation}\label{eq:brk-m}
m= (n-1)(p-k)+1+ \sum_{i=1}^T(\sigma_i+s_i-p)
\end{equation}
for which we have
 $$\dim \brkS p{p-1}m(\Xi,S)=\dim \brkS pkn(\Xi,S)-\#(M\cap \Xi)\leq \dim \brkS pkn(\Xi,S).$$

\begin{lemma}
For all $\Xi$ and $S$ we have
\begin{equation}\label{eq:broken-bounds}
\frac{(p+1)!}{(2p+2)!\sqrt{2p+3}} (n+T)^{-p-1}\le \brkC pkn(\Xi,S) \le \const pkn
\end{equation}
\end{lemma}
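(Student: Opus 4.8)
The plan is to establish the two inequalities independently, reducing each to results already available. For the lower bound I would adapt the argument of Lemma~\ref{lem:legendre-poly} to a single well-chosen subinterval. On each interval $J$ delimited by two consecutive points of the combined partition $\{j/n:\,j=1,\dots,n-1\}\cup\Xi$, every element of $\brkS pkn(\Xi,S)=\spline pkn+\mathfrak J^p_k(\Xi,S)$ restricts to a polynomial of degree at most $p$; hence so does $\mathfrak P_{p,k,n}f$, and therefore $\norm{f-\mathfrak P_{p,k,n}f}_J\ge\inf_{q\in\poly p}\norm{f-q}_J$. This combined partition has at most $n+T$ subintervals covering $[0,1]$, so one of them, say $J^*$, has length $h\ge 1/(n+T)$, and moreover $J^*\subseteq[\xi_i,\xi_{i+1}]$ for some $i$ because $\Xi$ belongs to the partition. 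I would then take $f$ to be the function that vanishes together with its first $p$ derivatives at the left endpoint of $J^*$, equals a polynomial of degree $p+1$ on $J^*$, and continues to the right as the degree-$p$ Taylor polynomial of that piece; such an $f$ is globally $\smooth p$, hence lies in $\brkH^{p+1}(\Xi,S)$, and $\partial^{p+1}f$ is supported in $J^*$, so $\norm{\partial^{p+1}f}_\Xi=\norm{\partial^{p+1}f}_{J^*}$. Rescaling $[0,1]$ to $J^*$ in Lemma~\ref{lem:legendre-poly} then gives
$$\inf_{q\in\poly p}\norm{f-q}_{J^*}=\frac{(p+1)!}{(2p+2)!\sqrt{2p+3}}\,h^{p+1}\norm{\partial^{p+1}f}_\Xi\ge\frac{(p+1)!}{(2p+2)!\sqrt{2p+3}}\,(n+T)^{-p-1}\norm{\partial^{p+1}f}_\Xi,$$
which yields the lower bound on $\brkC pkn(\Xi,S)$.

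For the upper bound the key point is that $\brkS pkn(\Xi,S)\supseteq\spline pkn$, so the enriched projection is at least as accurate as $\proj pkn$; the only work is to pass from the broken Sobolev norm to the full $H^{p+1}$ norm. Given $f\in\brkH^{p+1}(\Xi,S)$, I would construct, breakpoint by breakpoint, a piecewise polynomial $h\in\mathfrak J^p_k(\Xi,S)$ of degree $p$ whose jumps in the derivatives of orders $0,\dots,p$ at each $\xi_i$ match those of $f$; since $f$ is $\smooth{s_i}$ at $\xi_i$ and $\min\{k,s_i\}\le s_i$, the prescribed low-order jumps vanish and $h$ is indeed consistent with membership in $\mathfrak J^p_k(\Xi,S)$. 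Then $g:=f-h$ has no jumps in its first $p$ derivatives across any $\xi_i$, is piecewise in $H^{p+1}$, and hence lies in $H^{p+1}(0,1)$ with $\partial^{p+1}g=\partial^{p+1}f$ a.e., so $\norm{\partial^{p+1}g}=\norm{\partial^{p+1}f}_\Xi$. Since $h\in\brkS pkn(\Xi,S)$ and $\proj pkn g\in\spline pkn\subseteq\brkS pkn(\Xi,S)$, optimality of the orthogonal projection gives
$$\norm{f-\mathfrak P_{p,k,n}f}=\norm{g-\mathfrak P_{p,k,n}g}\le\norm{g-\proj pkn g}\le\const pkn\norm{\partial^{p+1}g}=\const pkn\norm{\partial^{p+1}f}_\Xi.$$

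The main obstacle is the construction and verification of $h$ in the upper bound: one must check that $h$ can be chosen inside $\mathfrak J^p_k(\Xi,S)$ --- this uses the $\smooth{s_i}$-regularity of $f$ at each $\xi_i$, which forces the low-order jumps of both $f$ and $h$ to vanish --- and, crucially, that the remainder $g=f-h$ is a genuine element of $H^{p+1}(0,1)$ and not merely piecewise $H^{p+1}$; this holds because $g$ is $\smooth p$ across every breakpoint and piecewise $H^{p+1}$ on the finitely many pieces. Everything else is a routine rescaling of the one-interval estimates from Section~\ref{sec:univ}.
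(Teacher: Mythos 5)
Your proof is correct. The upper bound is essentially the paper's argument: the authors likewise subtract a piecewise polynomial $g\in\mathfrak J^p_k(\Xi,S)$ so that $f-g\in H^{p+1}(0,1)$, use that $\spline pkn\subseteq\brkS pkn(\Xi,S)$ to pass to $\proj pkn$, and recover $\norm{\partial^{p+1}f}_\Xi$ because the correction is piecewise of degree $p$; you simply spell out the truncated-power construction and the verification that the low-order jumps vanish, which the paper leaves implicit. The lower bound, however, is genuinely different. The paper observes that $\brkS pkn(\Xi,S)$ is contained in the space of fully discontinuous piecewise polynomials on the combined partition (at most $n+T$ cells), tests with the single global function $x^{p+1}$, and invokes \emph{without proof} the fact that among partitions with a fixed number of cells the uniform one minimizes the $L^2$ error of $x^{p+1}$ (a power-mean/convexity argument on $\sum_j h_j^{2p+3}$). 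You instead localize: pick the largest cell $J^*$ of the combined partition, which has length at least $1/(n+T)$, and build a $\smooth p$ test function whose $(p+1)$st derivative is supported in $J^*$, reducing everything to the rescaled single-interval Legendre computation of Lemma~\ref{lem:legendre-poly}. Your route is slightly longer to set up (one must check the test function lies in $\brkH^{p+1}(\Xi,S)$, which it does since it is globally $\smooth p$) but it is fully self-contained, avoiding the unproved optimality-of-uniform-partitions claim; the paper's route is shorter but leaves that convexity step to the reader. Both yield exactly the stated constant.
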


\begin{proof}
The lower bound is obtained by looking at $k=-1$. In this case $\brkS pkn(\Xi,S)$ is a space of discontinuous piecewise polynomials on the non-uniform partition containing the  intersections $(\xi_i,\xi_{i+1})\cap I_j$. This partition has at most $n+T$ elements, moreover for a given number of elements  the approximation error of $x^{p+1}$ is minimized by the uniform partition. We can thus use $\const p{-1}{n+T}$ as a lower bound.

Next we look at the upper bound.
Given any $f \in \brkH^{p+1}(\Xi,S)$ we can choose a $g\in \mathfrak J^p_k(\Xi,S)$ such that $f-g\in H^{p+1}$ and 
\begin{align*}
\Vert f -\mathfrak P_{p,k,n} f\Vert^2 &=\Vert (f-g) -\mathfrak P_{p,k,n} (f-g) \Vert^2\\
&\le\norm{(f-g) -\proj pkn (f-g)}^2\\
&\le (\const pkn \norm{\partial^{p+1}(f-g)})^2\\
&=(\const pkn)^2\sum_{i=0}^T \norm{\partial^{p+1}(f-g)}^2_{L^2(\xi_i,\xi_{i+1})}\\
&=(\const pkn \norm{\partial^{p+1}f}_\Xi)^2.
\end{align*}
The result then follows by taking the square-root of both sides.
\end{proof}

Similarly to Section \ref{sec:low} we deduce the following result:

\begin{theorem}\label{thm:broken}
 Let $\Xi$ and $S$ be given. If $\rt pk{\bar n}<1$ for some $\bar n$, then for all 
\begin{equation}\label{ineq:2}
n\ge \Big(1  + \frac{T(p-k)-\sum_{i=1}^T (\sigma_i+s_i-p)}{p-k-1} \Big)\bar n - T
\end{equation}
and $m$ as in \eqref{eq:brk-m} we have
$$
\brkC p{p-1}m<\brkC pkn.
$$
\end{theorem}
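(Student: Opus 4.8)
The plan is to combine the bounds from the previous lemma with the machinery already developed in Section~\ref{sec:low}, since the proof of Theorem~\ref{thm:lower} is essentially a template here. First I would use the chain of inequalities
$$
\frac{\brkC p{p-1}m(\Xi,S)}{\brkC pkn(\Xi,S)} \le \frac{\const p{p-1}m}{\const p{-1}{n+T}}\cdot\frac{\const p{-1}{n+T}}{\const pkn} ,
$$
but this is awkward; instead it is cleaner to apply \eqref{eq:broken-bounds} directly: the numerator satisfies $\brkC p{p-1}m(\Xi,S)\le \const p{p-1}m$ and the denominator satisfies $\brkC pkn(\Xi,S)\ge \frac{(p+1)!}{(2p+2)!\sqrt{2p+3}}(n+T)^{-p-1}$. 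Dividing these two bounds and invoking Lemma~\ref{lem:univk} for the numerator gives
$$
\frac{\brkC p{p-1}m(\Xi,S)}{\brkC pkn(\Xi,S)} \le (m\pi)^{-p-1}\cdot \frac{(2p+2)!\sqrt{2p+3}}{(p+1)!}(n+T)^{p+1} ,
$$
which, after the same Stirling estimate used in the proof of Lemma~\ref{thm:ratio}, is bounded by $\big(\frac{4}{e\pi}\big)^{p+1}\big(\frac{n+T}{m}\big)^{p+1}(p+1)^{p+1}\sqrt{4p+6}$.

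Next I would compare this with the quantity $\rt pk{\bar n}=(\B pk{\bar n})^{p+1}\sqrt{4p+6}$, where $\B pk{\bar n}=\frac{4}{e\pi}\frac{\bar n(p+1)}{(p-k)(\bar n-1)+1}$. It therefore suffices to show that $\frac{n+T}{m}\le \B pk{\bar n}\cdot\frac{e\pi}{4(p+1)}=\frac{\bar n}{(p-k)(\bar n-1)+1}$, i.e.
$$
(n+T)\big[(p-k)(\bar n-1)+1\big] \le m\,\bar n .
$$
Substituting the definition of $m$ from \eqref{eq:brk-m}, namely $m=(n-1)(p-k)+1+\sum_{i=1}^T(\sigma_i+s_i-p)$, and solving the resulting linear inequality in $n$, I expect to land exactly on the stated bound \eqref{ineq:2}; this is a routine (if somewhat tedious) rearrangement, using $p-k-1>0$ for $k<p-1$ to divide through without flipping the inequality. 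Once $\rt pk{\bar n}<1$ is assumed, the chain then yields $\brkC p{p-1}m(\Xi,S)<\brkC pkn(\Xi,S)$, as claimed.

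The main obstacle I anticipate is purely bookkeeping: carefully tracking the term $\sum_{i=1}^T(\sigma_i+s_i-p)$ (which is $\le 0$, since $\sigma_i\le p-\min\{k,s_i\}\le p-k$ and $s_i\le p-1$, actually one should check the sign case by case) through the algebra so that the final inequality matches \eqref{ineq:2} with the correct coefficient $\frac{T(p-k)-\sum_i(\sigma_i+s_i-p)}{p-k-1}$ and the correct $-T$ shift. A secondary subtlety worth a sentence is that, because $\dim\brkS p{p-1}m(\Xi,S)$ may be strictly smaller than $\dim\brkS pkn(\Xi,S)$ (by $\#(M\cap\Xi)$), this comparison is in the smooth space's favour — it uses no more degrees of freedom — so the inequality is if anything conservative; no extra argument is needed beyond noting it.
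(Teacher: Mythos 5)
Your proposal is correct and follows essentially the same route as the paper: bound $\brkC p{p-1}m$ above by $(m\pi)^{-p-1}$ via \eqref{eq:broken-bounds} and Lemma~\ref{lem:univk}, bound $\brkC pkn$ below by the discontinuous-spline constant on $n+T$ segments, apply the Stirling estimate of Lemma~\ref{thm:ratio}, and reduce the condition to $\brkB pkn\le \B pk{\bar n}$, which rearranges exactly to \eqref{ineq:2}. Your linear algebra in $n$ checks out (and your aside that $\sum_i(\sigma_i+s_i-p)\le 0$ is not actually needed and is false when $\xi_i\notin\{j/n\}$, where $\sigma_i+s_i-p=s_i-\min\{k,s_i\}\ge 0$, but you correctly flag it as immaterial).
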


\begin{proof}
	Reasoning as in Section \ref{sec:comparison-1d} we have
	$$
	\frac {\brkC p{p-1}m}{\brkC pkn}\le \brkR pkn.
	$$
	where
	\begin{align*}
	\brkR pkn&= (\brkB pkn)^{p+1}\sqrt{4q+6}\quad\text{with}\\
	\brkB pkn&=\frac{4}{e\pi} \frac{(n+T) (p+1)}{(p-k)(n-1)+1+\sum_{i=1}^T(\sigma_i+s_i-p)}.
	\end{align*}
Moreover, $$\brkB pkn \le \B pk{\bar n}\quad \Rightarrow\quad \brkR pkn\le \rt pk{\bar n}$$ 
and
 $\brkB pkn \le \B pk{\bar n}$ is equivalent to \eqref{ineq:2}.
\end{proof}

\section{The multivariate case}\label{sec:tens}

In this section we consider the unit hypercube  $\Omega=(0,1)^d$ and a tensor product space
\begin{equation}\label{eq:tensor}
{\bf V}= V_1 \otimes\dots\otimes V_d.
\end{equation}

Let $\Pi_{\bf V}$ be the  $L^2(\Omega)$ projection onto $\bf V$, $\Pi_{i}$ the $L^2(0,1)$ projection onto $V_i$ and $C_{i}$ be the smallest constant in the univariate estimate
$$
\Vert f-\Pi_{i} f\Vert \le C_i \Vert \partial^{q_i+1}f\Vert,\qquad\forall f\in H^{q_i+1}(0,1).
$$

\begin{theorem}\label{thm:multivariate}
For all $f\in L^2(\Omega)$, such that $\partial_i^{q_i+1} f \in L^2(\Omega)$ we have
\begin{equation}\label{ineq:kd}
\norm{f-\Pi_{\bf V} f} \le \sum_{i=1}^d C_i \norm{\partial_i^{q_i+1}f}
\end{equation}
and the result is sharp.
\end{theorem}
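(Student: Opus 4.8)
The plan is to prove the estimate by a telescoping (hybrid) argument, replacing the full tensor projection $\Pi_{\bf V}$ by a product of one-dimensional projections acting coordinatewise, and peeling off one factor at a time. Write $\Pi_{\bf V} = P_1 P_2 \cdots P_d$, where $P_i$ denotes the univariate projection $\Pi_i$ applied in the $i$-th variable only (with the identity in the other variables); these operators commute and each has operator norm $1$ on $L^2(\Omega)$, since $\Pi_i$ is an $L^2(0,1)$-orthogonal projection and $\|\cdot\|_{L^2(\Omega)}$ is built from $\|\cdot\|_{L^2(0,1)}$ by Fubini. Then I would write the telescoping identity
$$
f - \Pi_{\bf V} f = \sum_{i=1}^d \big(P_1\cdots P_{i-1}\big)\big(f - P_i f\big),
$$
which is verified by expanding the right-hand side and cancelling consecutive terms, with the convention that $P_1\cdots P_{i-1}$ is the identity when $i=1$.

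Next I would bound each summand. Since each of $P_1,\dots,P_{i-1}$ is a norm-$1$ operator on $L^2(\Omega)$, we get $\norm{(P_1\cdots P_{i-1})(f-P_i f)} \le \norm{f - P_i f}$. To estimate $\norm{f - P_i f}$, I would apply the univariate estimate in the $i$-th variable for a.e.\ fixed value of the remaining variables: the defining inequality for $C_i$ gives, for a.e.\ $\hat x$ (the other $d-1$ coordinates),
$$
\int_0^1 |f(\hat x, x_i) - (\Pi_i f)(\hat x, x_i)|^2\, dx_i \le C_i^2 \int_0^1 |\partial_i^{q_i+1} f(\hat x,x_i)|^2\, dx_i,
$$
and integrating over $\hat x\in(0,1)^{d-1}$ and using Fubini yields $\norm{f - P_i f} \le C_i \norm{\partial_i^{q_i+1} f}$. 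Summing over $i$ via the triangle inequality gives \eqref{ineq:kd}. One small point to address carefully is that $P_i f$ makes sense and equals $\Pi_i$ applied slicewise: this is standard once one knows $\partial_i^{q_i+1} f\in L^2(\Omega)$, which guarantees that a.e.\ slice lies in $H^{q_i+1}(0,1)$.

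For sharpness I would exhibit, for each $i$, a function $g_i\in H^{q_i+1}(0,1)$ realizing (or nearly realizing) the constant $C_i$, i.e.\ $\norm{g_i - \Pi_i g_i} = C_i\norm{\partial^{q_i+1}g_i}$, and then consider tensor-type functions of the form $f = \sum_i g_i(x_i)\prod_{j\ne i} h_j(x_j)$ with the $h_j$ chosen, e.g., constant in appropriate slots, or more simply test with a single $g_i(x_i)$ (constant in the other variables) to see that no constant smaller than each individual $C_i$ can work, and then a direction-balancing argument shows the sum cannot be improved either. The main obstacle I anticipate is precisely this sharpness claim: showing the sum $\sum_i C_i$ cannot be replaced by anything smaller (rather than merely $\max_i C_i$) requires choosing $f$ so that the error contributions in the different coordinate directions add up essentially without cancellation — this likely needs the extremal $g_i$ to be orthogonal to the relevant spline space and a careful computation of cross terms in $\norm{f-\Pi_{\bf V}f}^2$, possibly in an asymptotic or limiting regime rather than with a single exact extremizer.
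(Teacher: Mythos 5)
Your proof of the inequality is essentially the paper's argument: the telescoping identity $f-\Pi_{\bf V}f=\sum_{i=1}^d(P_1\cdots P_{i-1})(f-P_if)$ is just the unrolled form of the paper's induction on $d$, and both rest on the same two facts, namely that each partial projection has operator norm $1$ on $L^2(\Omega)$ and that the univariate estimate applies slicewise by Fubini. On sharpness you are making the problem harder than the paper intends: the claim is only that no individual $C_i$ in the sum can be decreased, which your first suggestion --- testing with $f(x_1,\dots,x_d)=g_i(x_i)$, for which all terms but the $i$-th vanish --- already establishes, exactly as in the paper; the stronger statement that the sum cannot be replaced by something like $\max_i C_i$ is not asserted, so the ``direction-balancing'' construction you worry about is not needed.
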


\begin{proof}
First of all, we remind that $L^2(\Omega)=L^2(0,1)\otimes\dots\otimes L^2(0,1)$ and that $\Pi_{\bf V}$ factorizes as
$$
\Pi_{\bf V} = \Pi_1\otimes \dots \otimes \Pi_d.
$$
These factors commute as in
 $\Pi_i\otimes \Pi_j=(\Pi_i\otimes \mathrm{I})\circ(\mathrm{I}\otimes \Pi_j)=(\mathrm{I}\otimes \Pi_j)\circ (\Pi_i\otimes \mathrm{I})$ where $\mathrm I$ is in the identity operator.

The theorem is proved by induction on $d$. For $d=1$ it is the definition of $C_i$.
Now suppose the result is true for dimension $d-1$, i.e., that for $\Pi_*=\Pi_1\otimes\dots\otimes\Pi_{d-1}$ we have
$$
\norm{f-\Pi_* f}\le \sum_{i=1}^{d-1}C_i\norm{\partial_i^{q_i+1}f}.
$$
Using the triangle inequality and that $\norm{\mathrm{I}\otimes\Pi_d}=1$ we find
\begin{align*}
\norm{f- \Pi_1\otimes\dots\otimes \Pi_d f}&\le  \norm{f-\mathrm{I}\otimes\Pi_d f} + \norm{\mathrm{I}\otimes\Pi_df- \Pi_*\otimes\Pi_d f}\\
&\le  C_d \norm{\partial_{d}^{q_d+1}f} + \norm{\mathrm{I}\otimes\Pi_d} \norm{f - \Pi_*\otimes \mathrm{I} f}\\
 &\le \sum_{i=1}^d C_i  \norm{\partial_i^{q_i+1}f}.
\end{align*}

To see that the result is sharp we consider $f(x_1,\dots,x_d)=g(x_i)$  and notice that the statement is false for any constant smaller than $C_i$.
\end{proof}

From Theorem \ref{thm:multivariate} we deduce that all conclusions obtained in the univariate comparisons extend to the tensor product setting by considering each direction separately.

\section{Conclusions}

In this paper we have provided a mathematical justification for the numerically observed phenomena that $\smooth {p-1}$ splines of degree $p$, the so-called $k$-\emph{method} in IGA, provide better approximation in degrees of freedom than splines of smoothness $\smooth {-1}$ (DG) and $\smooth {0}$ (FEM). Specifically, we have shown in Section~\ref{sec:comparison-1d} that for sufficiently refined uniform partitions, $\smooth{p-1}$ splines yield better a priori error estimates than $\smooth{-1}$ splines for $p\ge 2$, and $\smooth0$ splines for $p\ge 3$, when approximating functions in the Sobolev space $H^{p+1}$.
For $p=2$ it is an open problem whether $\smooth {1}$ spline spaces provide better a priori error estimates than $\smooth0$ spline spaces of the same dimension. Sharper estimates on the approximation constants could complete the result for this case. Since we have used the lower bound for discontinuous spline approximation also as the lower bound for continuous spline approximation, it seems reasonable to look for an improved lower bound on the approximation constants for $\smooth0$ splines.

It is worth mentioning that the combination of the techniques in Sections~\ref{sec:low} and \ref{sec:broken} allow also the comparison for lower order broken Sobolev spaces. This comparison has not been included.

\section*{Appendix}
\begin{lemma}\label{lem:poly-interp}
Let $p\geq 1$ be any odd number. Then
the interpolation problem: find $g \in \poly p$ such that for all $s=0,2,\ldots,p-1,$
\begin{equation}\label{poly-interp}
\left\{\begin{aligned}
\partial^s g(0)&=a_s\\
\partial^s g(1)&=b_s
\end{aligned}\right .\ 
\end{equation}
admits a solution for all $\{a_s\}$, $\{b_s\}$.
\end{lemma}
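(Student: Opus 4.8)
The statement concerns the Hermite-type interpolation problem on $\poly p$ with $p$ odd: prescribe the derivatives of even order $s=0,2,\ldots,p-1$ at the two endpoints $0$ and $1$. The number of conditions is $2\cdot\frac{p+1}{2}=p+1$, which matches $\dim\poly p$, so the problem is to show the associated linear map is invertible; equivalently, that the only $g\in\poly p$ with $\partial^s g(0)=\partial^s g(1)=0$ for all even $s\le p-1$ is $g\equiv 0$. The plan is to prove this homogeneous statement.

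First I would set up the linear-algebra reduction: the evaluation functionals $g\mapsto \partial^s g(0)$ and $g\mapsto \partial^s g(1)$ for $s\in\{0,2,\ldots,p-1\}$ form a family of $p+1$ linear functionals on the $(p+1)$-dimensional space $\poly p$, so solvability for all data is equivalent to the functionals being linearly independent, which is equivalent to the kernel being trivial. So it suffices to take $g\in\poly p$ annihilated by all of them and show $g=0$. Next I would exploit the structure: since $\partial^s g(0)=0$ for $s=0,2,\ldots,p-1$, the Taylor expansion of $g$ at $0$ has vanishing coefficients in all even degrees up to $p-1$, hence (as $g$ has degree $\le p$ and $p$ is odd, so $p$ itself is odd) $g$ is a linear combination of the odd monomials $x,x^3,\ldots,x^p$; write $g(x)=\sum_{j=0}^{(p-1)/2} c_j x^{2j+1}$. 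Then I would use the remaining $\frac{p+1}{2}$ conditions at $x=1$, namely $\partial^s g(1)=0$ for $s=0,2,\ldots,p-1$, to force all $c_j=0$.

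For that last step, the cleanest route is a parity/symmetry argument rather than a determinant computation. Consider $h(x)=g(x)+g(-x)$, which vanishes identically since $g$ is odd — that gives nothing — so instead I would look at where the even-order derivatives of an odd polynomial vanish. The key observation: if $g$ is odd then $\partial^s g$ is odd for $s$ even, so $\partial^s g(-1)=-\partial^s g(1)=0$ as well; thus the odd polynomial $\partial^{s}g$ (for each even $s\le p-1$) vanishes at both $1$ and $-1$. Applying this with $s=p-1$: $\partial^{p-1}g$ is a polynomial of degree $\le 1$ (in fact of the form (constant)$\cdot x$, being odd) vanishing at $x=1$, hence $\partial^{p-1}g\equiv 0$, which kills the top coefficient $c_{(p-1)/2}$. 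Now induct downward: with the top coefficient gone, $g$ has degree $\le p-2$, and repeating the argument with $s=p-3$ kills the next coefficient, and so on down to $s=0$, giving $g\equiv 0$. An equivalent and perhaps slicker phrasing: the problem decouples because the even-derivative data at $\{0,1\}$ together with the forced oddness of $g$ is equivalent to prescribing, after the substitution $t=2x-1$, a set of symmetric/antisymmetric conditions; but the downward induction on the degree via $\partial^{p-1},\partial^{p-3},\ldots$ is self-contained and avoids any change of variables.

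The main obstacle — really the only place requiring care — is justifying the inductive step cleanly: one must check that after concluding $c_{(p-1)/2}=\cdots=c_{(p-1)/2-r+1}=0$, the polynomial $\partial^{p-1-2r}g$ is again an odd polynomial of degree $\le 1$ vanishing at $x=1$ (and hence at $x=-1$), so that it must be identically zero and yields $c_{(p-1)/2-r}=0$. This is routine once one tracks that differentiating an odd polynomial an even number of times preserves oddness and that the surviving leading term $c_j x^{2j+1}$ differentiated $2j$ times is a nonzero multiple of $x$. I would also remark that the companion case $p$ even is handled separately in Lemma~\ref{lem:poly-interp2}, where the set of prescribed orders is $\{s: 0\le s<p,\ s+p\text{ odd}\}=\{1,3,\ldots,p-1\}$, and an entirely analogous argument with the roles of odd/even interchanged applies.
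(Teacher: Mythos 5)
Your proof is correct, but it takes a genuinely different route from the paper's. The paper argues constructively by induction on $p$: given the solution $f\in\poly{p-2}$ of the shifted problem (with data $a_{s+2},b_{s+2}$), it integrates twice, $g(x)=cx+d+\int_0^x\int_0^y f(z)\,dz\,dy$, which settles all conditions with $s\ge 2$, and then chooses the two free constants $c,d$ to match the two remaining conditions $g(0)=a_0$, $g(1)=b_0$. You instead count that the number of functionals equals $\dim\poly p$ and reduce to showing the kernel is trivial, then exploit parity: the vanishing even-order derivatives at $0$ force $g$ to be a combination of odd monomials $x,x^3,\ldots,x^p$, and the conditions at $1$ kill the coefficients one by one from the top, since $\partial^{p-1-2r}g$ reduces at each stage to a nonzero constant times $x$. (In fact your excursion through $\partial^s g(-1)=0$ is not needed: each $\partial^{p-1-2r}g$ is exactly of the form $c\,x$, so vanishing at $x=1$ alone already forces $c=0$.) Each approach has its merits: yours is a short, self-contained uniqueness argument; the paper's is constructive, produces the interpolant explicitly, and its integrate-and-adjust structure mirrors the induction used elsewhere in the paper (e.g.\ the projection $Q$ built from $\proj{p-1}{p-2}{n}$ in the proof of Theorem~\ref{thm:lower-norm}), and it transfers verbatim to the companion even-degree case of Lemma~\ref{lem:poly-interp2}.
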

\begin{proof}
We proceed by induction on $p$. If $p=1$ then the linear interpolant $g(x)=a_0 + x (b_0-a_0)$ satisfies $g(0)=a_0$ and $g(1)=b_0$ and is a solution.
Now, let $p\geq 3$ be any odd number and assume the result is true for $p-2$. Let $f\in \poly {p-2}$ be the solution of
\begin{equation*}
\partial^s f(0)=a_{s+2},\quad \partial^s f(1)=b_{s+2}\quad s=0,2,\ldots,p-3,
\end{equation*}
which we know exist by the induction hypothesis. We then define the function $g$ by integrating $f$ twice, i.e., 
$$g(x)=cx+d+\int_0^x\int_0^yf(z)dzdx.$$ 
This function then satisfies the cases $s\geq2$ of \eqref{poly-interp} for all $c,d\in\R$. We finish the proof by picking the constants $c$ and $d$ such that the case $s=0$, meaning $g(0)=a_0$ and $g(1)=b_0$, is also satisfied.
\end{proof}

\begin{lemma}\label{lem:poly-interp2}
Let $p\geq 0$ be any even number. Then
the interpolation problem: find $g \in \poly p$ such that for all $s=1,3,\ldots,p-1,$
\begin{equation}\label{poly-interp2}
\left\{\begin{aligned}
\partial^s g(0)&=a_k\\
\partial^s g(1)&=b_k
\end{aligned}\right .\
\end{equation}
admits a solution for all $\{a_s\}$, $\{b_s\}$.
\end{lemma}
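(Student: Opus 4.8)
The plan is to reduce the statement to the already-proved Lemma~\ref{lem:poly-interp} by differentiating once, after disposing of a degenerate case.

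First I would note that when $p=0$ the index set $s=1,3,\ldots,p-1$ is empty, so the interpolation problem carries no conditions and any constant $g\in\poly 0$ is a solution. Hence I may assume $p\ge 2$, so that $p-1$ is an odd number with $p-1\ge 1$, to which Lemma~\ref{lem:poly-interp} applies.

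The key observation is that a polynomial $g\in\poly p$ amounts to the same data as its derivative $h:=\partial g\in\poly{p-1}$ together with one constant of integration, and that $\partial^s g=\partial^{s-1}h$ for every $s\ge 1$. Under this dictionary the prescribed values of the odd-order derivatives $\partial^s g$, $s=1,3,\ldots,p-1$, at the endpoints $0$ and $1$ become exactly the prescribed values of the even-order derivatives $\partial^t h$, $t=0,2,\ldots,p-2$, at $0$ and $1$. Applying Lemma~\ref{lem:poly-interp} with degree $p-1$ in place of $p$ produces an $h\in\poly{p-1}$ with $\partial^t h(0)=a_{t+1}$ and $\partial^t h(1)=b_{t+1}$ for $t=0,2,\ldots,p-2$. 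I would then set $g(x)=\int_0^x h(z)\,dz$ (the constant of integration is irrelevant): this $g$ lies in $\poly p$ and satisfies $\partial^s g(0)=a_s$, $\partial^s g(1)=b_s$ for all odd $s$ with $1\le s\le p-1$, which is what is required.

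There is essentially no obstacle here; the only point worth flagging is that one should \emph{not} try to copy the proof of Lemma~\ref{lem:poly-interp} verbatim by building $g$ from a degree-$(p-2)$ polynomial and integrating twice. If one did so, the second constant of integration would not affect $\partial g$, so the value $\partial g(1)=\partial g(0)+\int_0^1 f$ (with $f$ the inner polynomial) would be forced and could not in general be matched to $b_1$. Integrating only once and invoking Lemma~\ref{lem:poly-interp}, which already packages exactly the $p$ scalar conditions in play and leaves precisely the one expected free parameter (the value $g(0)$), avoids this and finishes the proof.
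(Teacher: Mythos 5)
Your proof is correct and follows essentially the same route as the paper: both reduce to Lemma~\ref{lem:poly-interp} for the odd degree $p-1$ and integrate once, the constant of integration being free since only derivatives of order at least one are prescribed. (Your index bookkeeping $\partial^t h(0)=a_{t+1}$ is in fact cleaner than the paper's, which contains an apparent typo $a_{s-1}$ where $a_{s+1}$ is meant.)
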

\begin{proof}
For $p=0$ there is nothing to prove, and so we consider an even number $p\geq 2$. We then let $f\in\poly{p-1}$ be the solution of
\begin{equation*}
\partial^s f(0)=a_{s-1},\quad \partial^s f(1)=b_{s-1}\quad s=0,2,\ldots,p-2,
\end{equation*}
which we know exist by Lemma \ref{lem:poly-interp}. The function $g(x)=c+\int_0^xf(y)dy$ is then a solution of \eqref{poly-interp2} for any $c\in\R$.
\end{proof}

\section*{Acknowledgements}
The research leading to these results has received funding from the European Research Council under the European Union's Seventh Framework Programme (FP7/2007-2013) / ERC grant agreement 339643.

\bibliography{k-ref}

\providecommand{\bysame}{\leavevmode\hbox to3em{\hrulefill}\thinspace}
\providecommand{\MR}{\relax\ifhmode\unskip\space\fi MR }
\providecommand{\MRhref}[2]{%
  \href{http://www.ams.org/mathscinet-getitem?mr=#1}{#2}
}
\providecommand{\href}[2]{#2}
\begin{thebibliography}{10}

\bibitem{MR2250029}
Y.~Bazilevs, L.~Beir\~ao~da Veiga, J.~A. Cottrell, T.~J.~R. Hughes, and
  G.~Sangalli, \emph{Isogeometric analysis: approximation, stability and error
  estimates for {$h$}-refined meshes}, Math. Models Methods Appl. Sci.
  \textbf{16} (2006), no.~7, 1031--1090.

\bibitem{MR2800710}
L.~Beir\~ao~da Veiga, A.~Buffa, J.~Rivas, and G.~Sangalli, \emph{Some estimates
  for {$h$}-{$p$}-{$k$}-refinement in isogeometric analysis}, Numer. Math.
  \textbf{118} (2011), no.~2, 271--305.

\bibitem{Buffa:14}
L.~Beir\~ao~da Veiga, A.~Buffa, G.~Sangalli, and R.~V\'azquez,
  \emph{Mathematical analysis of variational isogeometric methods}, Acta Numer.
  \textbf{23} (2014), 157--287.

\bibitem{Evans:2009}
J.~A. Evans, Y.~Bazilevs, I.~Babu{\v s}ka, and T.~J.~R. Hughes,
  \emph{{$n$}-widths, sup-infs, and optimality ratios for the {$k$}-version of
  the isogeometric finite element method}, Comput. Methods Appl. Mech. Engrg.
  \textbf{198} (2009), no.~21-26, 1726--1741.

\bibitem{Floater:2017}
M.~S. Floater and E.~Sande, \emph{Optimal spline spaces of higher degree for
  {$L^2$} {$n$}-widths}, J. Approx. Theory \textbf{216} (2017), 1--15.

\bibitem{Floater:2018}
\bysame, \emph{Optimal spline spaces for {$L^2$} {$n$}-width problems with
  boundary conditions}, Constr. Approx. (2018).

\bibitem{Hughes:2005}
T.~J.~R. Hughes, J.~A. Cottrell, and Y.~Bazilevs, \emph{Isogeometric analysis:
  {CAD}, finite elements, {NURBS}, exact geometry and mesh refinement}, Comput.
  Methods Appl. Mech. Engrg. \textbf{194} (2005), no.~39-41, 4135--4195.

\bibitem{Kolmogorov:36}
A.~Kolmogorov, \emph{{\"U}ber die beste {A}nn{\"a}herung von {F}unktionen einer
  gegebenen {F}unktionenklasse}, Ann. of Math. \textbf{37} (1936), 107--110.

\bibitem{Melkman:78}
A.~A. Melkman and C.~A. Micchelli, \emph{Spline spaces are optimal for {$L^2$}
  n-width}, Illinois J. Math. \textbf{22} (1978), 541--564.

\bibitem{Pinkus:85}
A.~Pinkus, \emph{n-{W}idths in approximation theory}, Springer-Verlag, Berlin,
  1985.

\bibitem{Robbins:55}
H.~Robbins, \emph{A remark on stirling's formula}, The American Mathematical
  Monthly \textbf{62} (1955), no.~1, 26--29.

\bibitem{Takacs:2016}
S.~Takacs and T.~Takacs, \emph{Approximation error estimates and inverse
  inequalities for {B}-splines of maximum smoothness}, Math. Models Methods
  Appl. Sci. \textbf{26} (2016), no.~7, 1411--1445.

\end{thebibliography}

\end{document}